\documentclass[12pt, a4paper,leqno,oneside]{amschanged}

\usepackage{tikz}
\usepackage{calc}

\usepackage{xcolor}

\usepackage{hyperref}

\usepackage{amsmath}
\usepackage{amssymb}
\usepackage{amsthm}
\usepackage[foot]{amsaddr}

\usepackage{atbeginend}

\usepackage{psfrag}
\usepackage{graphicx}
\usepackage{hyperref}
\hypersetup{
    colorlinks,%
    citecolor=black,%
    filecolor=black,%
    linkcolor=black,%
    urlcolor=black
}

\usepackage{wasysym}
\usepackage{fullpage}

\pagestyle{plain}

\usepackage{tikz}

\usepackage{type1cm}

\newtheorem{theorem}{Theorem}[section]

\newtheorem{proposition}[theorem]{Proposition}
\newtheorem{lemma}[theorem]{Lemma}

\numberwithin{equation}{section}

\theoremstyle{remark}
\newtheorem{remark}[theorem]{Remark}

\def\Max{{\mathrm{max}}}

\def\cD{\mathcal D}

\def\Z{\mathbb{Z}}

\overfullrule 1mm
\usepackage{titlesec}
\titleformat{\section}{\Large\bfseries}{\thesection}{1em}{}
\titleformat{\subsection}{\bfseries}{\thesubsection}{1em}{}

\usepackage{courier}

\usepackage{array}
\newcolumntype{e}{>{\displaystyle}r @{\,} >{\displaystyle}c @{\,} >{\displaystyle}l}

  \newcounter{constant}
  \newcommand{\newconstant}[1]{\refstepcounter{constant}\label{#1}}
  \newcommand{\useconstant}[1]{c_{\textnormal{\tiny \ref{#1}}}}
  \setcounter{constant}{-1}

\def\clap#1{\hbox to 0pt{\hss#1\hss}}

\def\mathclap{\mathpalette\mathclapinternal}

\def\mathclapinternal#1#2{\clap{$\mathsurround=0pt#1{#2}$}}

\usepackage{calc}
\def\arraypar#1{\parbox[c]{\textwidth - 2cm}{\centering #1}}

\begin{document}

\fontsize{12}{14}\rm
\addtolength{\abovedisplayskip}{.5mm}
\addtolength{\belowdisplayskip}{.5mm}
\AfterBegin{enumerate}{\addtolength{\itemsep}{2mm}}


\title{\LARGE \usefont{T1}{tnr}{b}{n} \selectfont
  I\lowercase{nterface motion in random media}} 

\author{\normalsize \itshape T. B\lowercase{odineau} $^1$ \color{white} \tiny and}
\address{$^1$ \'Ecole Normale Sup\'erieure, D\'epartement de
  Math\'ematiques et Applications, \newline \hspace*{10mm} Paris 75230,
  France {\itshape \texttt{bodineau@dma.ens.fr}}, {\itshape \texttt{augusto.teixeira@ens.fr}}.}

\author{\color{black} \normalsize \itshape A. T\lowercase{eixeira} $^1$ $^2$}
\address{$^2$ Instituto Nacional de Matem\'atica Pura e Aplicada, \newline
  \hspace*{10mm} Rio de Janeiro 22460-320, Brazil {\itshape \texttt{augusto@impa.br}}.}


\date{\today}

\thanks{AMS Subject classification: 82C05, 82C28, 82D30}

\begin{abstract}
We study the pinning phase transition for discrete surface dynamics in random environments.
A renormalization procedure is devised to prove that the interface moves with positive velocity under a finite size condition.
This condition is then checked for different examples of microscopic dynamics to illustrate the flexibility of the method.
We show in our examples the existence of a phase transition for various models, including high dimensional interfaces, dependent environments and environments with arbitrarily deep obstacles.
Finally, our ballisticity criterion is proved to be valid up to the critical threshold for a Lipschitz interface model.
\end{abstract}

\maketitle

\section{Introduction}
\label{s:intro}

Driven interfaces in random media have been studied in several physical contexts motivated for example by sliding charge waves \cite{fisher} and fluid flow in random media \cite{KoplikLevine}.
A simple way to state the problem in the latter framework, is to imagine a droplet of water on a random rough substrate. Tilting slowly the substrate induces a force leading to the motion of the droplet above a threshold tilt angle.
To model this transition, a crude approximation is to suppose that locally the contact line of the droplet on the substrate is straight and to parametrize it by a function $u(x,t)$ where the wetted substrate is the area $\{ (x,y) \in \mathbb{R}^2,  y \leq  u(x,t) \}$ and the dry substrate $\{ (x,y) \in \mathbb{R}^2,  y >  u(x,t) \}$, see Figure~\ref{fig: droplet}. The motion of the interface is then approximated by the semi-linear parabolic PDE
\begin{align}
\label{eq: continu}
\forall x \in \mathbb{R}, \qquad
& \partial_t u(x,t,\omega) = \Delta u(x,t, \omega) + f(x,u(x,t, \omega), \omega) + F \\
& u(x,0, \omega) = 0
\end{align}
where $f(x,y,\omega) + F$ is the random force on the interface at position $(x,y)$ and $f(x,y,\omega) \in \mathbb{R}$ is of mean zero. The mean force $F >0$ is induced by the tilt and modulated by the disorder of the rough substrate $f(x,y,\omega)$, where $\omega$ is a realization of a given substrate. It is also interesting to consider $x \in
\mathbb{R}^{d-1}$ with $d \geq 3$, for example to model fluid displacement in porous media.

\begin{figure}[ht]
\centerline{
    \includegraphics[angle=0, width=0.35\textwidth]{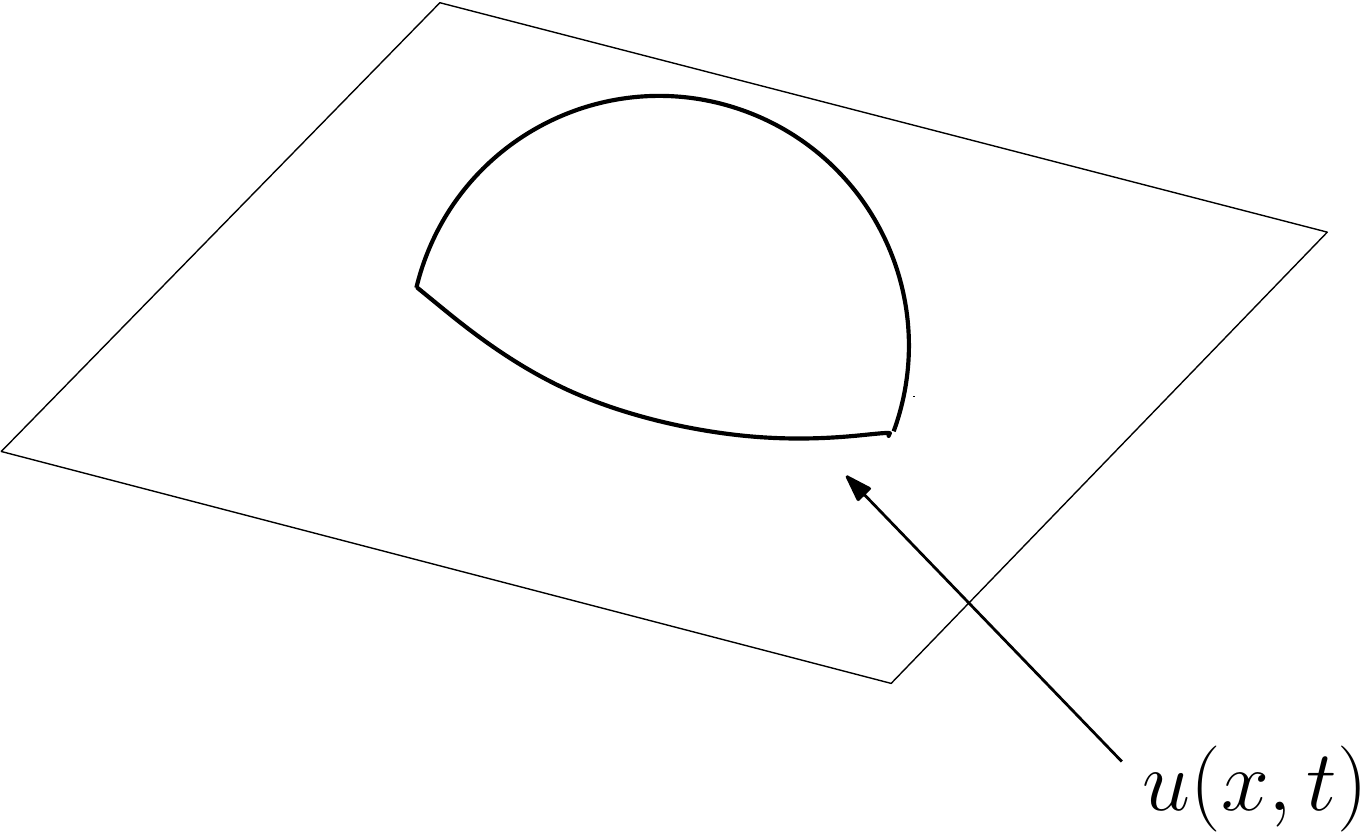}
    \hskip1cm
    \includegraphics[angle=0, width=0.35\textwidth]{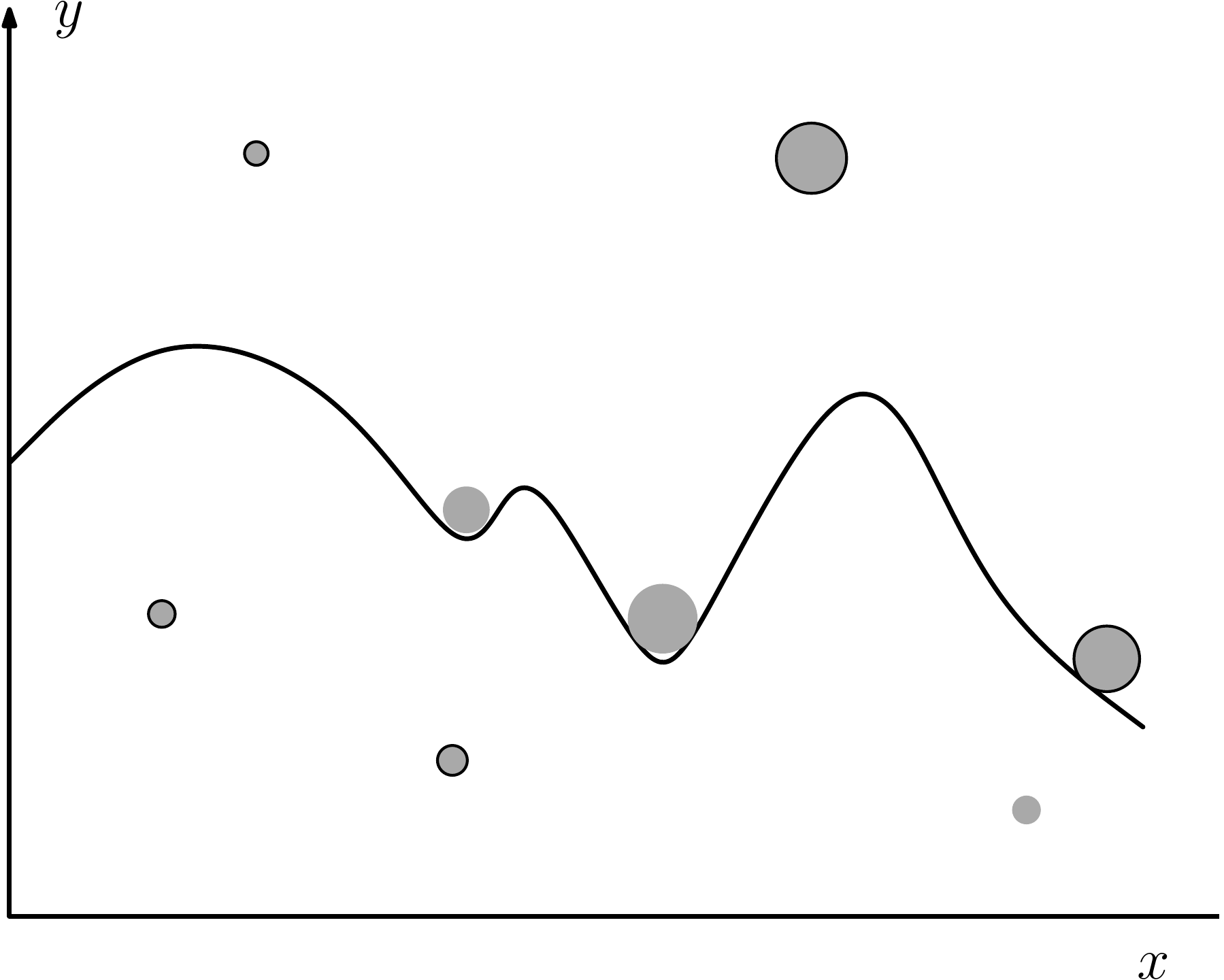}}
    \caption{\small On the left, a droplet on a substrate is depicted.
  Locally,  the contact line is approximated by a function $x \to u(x,t)$ (see the right picture).
     The disorder media is represented by randomly scattered obstacles with different strengths (the gray circles). The interface evolves in the $y$-axis direction and has to overcome the obstacles which are preventing the evolution.  }
    \label{fig: droplet}
\end{figure}

From toy models \cite{TL92}, mean field approximation \cite{Vannimenus} and functional renormalization group analysis \cite{NarayanFisher, doussal}, physicists understood that the interface \eqref{eq: continu} undergoes a phase transition from a pinned regime to a positive speed regime, depending on a critical force threshold
\begin{equation}
  F_c = \inf \Big\{F > 0; \qquad  \liminf_t \frac{u(0,t,\omega)}{t} > 0 \text{ a.s. under the external force $F$}\Big\}.
\end{equation}
The value of the critical exponents at the depinning  transition remains a  controversial issue. From a mathematical point of view, the behavior of the system around $F_c$ is not well understood yet, but the perturbative regimes have already been studied.
For $d \geq 2$ and when the mean force $F>0$ is close to zero, the pinning induced by the disorder is strong enough to prevent the interface from moving \cite{DDS11} ($F_c > 0$). Key to this proof is the occurrence of a Lipschitz percolation membrane \cite{DDGHS}, which blocks the interface's motion. On the other hand when $d =2$ and $F$ is large enough, it is proven in \cite{CovilleDirrLuckhaus, DS12} that the interface moves with a speed bounded from below ($F_c < \infty$). Note that these techniques are restricted to some particular models in $d = 2$ for independent environments and very strong forces. One of the goals of this paper is to provide tools that do not impose these various restrictions.

Even though the driving force is positive on average, there are rare random locations where the pinning force can be arbitrarily large, leading to a competition between these pinning obstacles and the elastic force, which tends to straighten the interface.
This competition occurs on multiple scales, involving collective effects and gets more complicated at the vicinity of the transition. Thus, it is not clear if the phase transition is sharp in the following sense. Let
\begin{equation}
  \label{e:F_t}
  F_d = \inf \{ F > 0; \qquad \lim_t u(0,t,\omega) = \infty \text{ a.s. under $F$} \},
\end{equation}
which is a weaker definition of depinning transition that does not require positive speed of divergence. An important question now is whether $F_d = F_c$, or in other words, is there an intermediate regime, where the surface is transient but sub-ballistic? It is well known that this is the case for the one-dimensional random walk in a random environment, see for instance \cite{zeitouni}. Another goal of ours is to investigate this issue and to provide tools to study the vicinity of the critical threshold.

\medskip

In rigorous equilibrium statistical mechanics, these sharp analysis often rely on a two-steps procedure. First, properties of the model are derived under some criterion and then the validity of the criterion is extended up to
the critical point. For example in percolation, the assumption of finite expected cluster size in the subcritical regime \cite{MR0101564} or the percolation on slabs for the supercritical phase. This allows one to implement a coarse graining procedure, leading to strong statements for the percolation model \cite{AizenmanChayesRusso, Pisztora}. Second, these assumptions are established for a range of parameters as large as possible and ideally up to the critical temperature. This is the case in percolation for the exponential decay of correlations \cite{Menshikov, AizenmanBarsky} and the slab percolation threshold \cite{GrimmettMarstrand}.

In this paper, we consider several discrete interface dynamics similar to \eqref{eq: continu}. Our discrete interface is modeled as a function $S_t:\mathbb{Z}^{d-1} \to \mathbb{Z}$ which evolves on discrete time $t = 0,1,\dots$, according to \eqref{e:evolve} below. In \eqref{e:W} we introduce a finite size criterion, which quantifies the probability that the interface $S_t$ gets completely blocked on a finite size box $C^L$ (of side length proportional to $L$):
\begin{equation}
  \label{e:W_intro}
  \mathbb{P}[\text{the surface $S$ gets completely blocked in $C^L$}] \text{ decays polynomially in $L$,}
\end{equation}
see \eqref{e:W} for more precise statement.

We don't require our random environment to be composed of i.i.d. random variables, since our techniques are robust to the presence of dependence. We however need a replacement for the independence of the environment, namely the mixing condition \eqref{e:cov2}.

Our main result, Theorem~\ref{c:speed}, states that
\begin{equation}
  \label{e:speed_intro}
  \text{under conditions \eqref{e:W_intro} and \eqref{e:cov2}, we have } \liminf_t \frac{S_t}{t} > 0.
\end{equation}
In other words, the theorem reduces the question of ballisticity ($F_c < \infty$) to a mixing condition together with a finite-size and static criterion.

The intuition behind the above result is that, as the interface evolves, deeper traps are encountered in various scales which locally block the interface. The criterion \eqref{e:W_intro} allows us to evaluate the frequency of these traps and Theorem~\ref{c:speed} proves by a multi-scale analysis that the traps may lower the speed of the interface but without harming ballisticity, thanks to the elastic force.

Let us first observe that our result works for any dimension $d \geq 2$. As an application, we prove the existence of a phase transition between ballistic and blocked regimes for the evolution of a $(d-1)$-dimensional interface in $\mathbb{Z}^d$, see Lemma~\ref{l:blocklips}.

We also conjecture that for several models, our criterion \eqref{e:W_intro} should hold up to the critical threshold (i.e. for any fixed force $F > F_c$). We have been able to establish this for a specific two-dimensional model in Lemma \ref{l:unblockd=2}, implying that $F_c = F_d$.

\medskip

The proof of our main result relies on multi-scale renormalization, which is a classical tool in statistical mechanics. In the physics literature, this technique is often referred to as the renormalization group and is very successful in establishing critical and near-critical properties of physical systems, see \cite{Fis96} for an overview on this subject. In the mathematical literature multi-scale analysis has been more useful for studying perturbative regimes, see for instance \cite{Szn09}, \cite{SV05} and \cite{MR841669} for some examples of such applications. We introduce several novelties in our renormalization scheme and in particular we devise a rigorous coarse graining argument for interface motion.

Finally we would like to stress two further advantages of the multi-scale approach that we introduce for this problem. The first is that it allows us to study dependent environments, since this analysis enters very little into the microscopic details of the dynamics, see Remark~\ref{r:constants}. Also, we were able to reduce the problem of ballisticity of the surface to the static criterion \eqref{e:W_intro}, which often translates into a percolation condition where several known techniques may be useful,
see for example Lemma~\ref{l:unblockd=2}. We don't see any obstruction to adapting our methods to different contexts, such as continuous time interface dynamics.

\medskip

The first part of the paper is devoted to the renormalization procedure under general assumptions.
In the second part, these assumptions are checked for different models.

{\bf Acknowledgments} - We would like to thank Vivien Lecomte for very useful discussions and insights on the physics of interfaces and Marcelo Hil\'ario for helpful discussions and careful reading. T.B Acknowledges the support of the french ministry of education through the ANR 2010 BLAN 0108 01 grant. A.T. is also grateful to the Brazilian-French Network in Mathematics for the opportunity to visit Paris during the elaboration of this work and for the financial support from CNPq, grants 306348/2012-8 and 478577/2012-5.

\section{Model}
\label{s:notation}

Fix from now on a dimension $d \geq 2$ and denote each element of $\Z^d$ by $(x,z)$, where $x \in \Z^{d-1}$ and $z \in \Z$. Throughout this text, we are going to consider an energy landscape on $\mathbb{Z}^d$, in which each site $(x,z) \in \mathbb{Z}^d$ is assigned an energy $\omega(x,z) \in \overline{\mathbb{R}} := \mathbb{R} \cup \{-\infty\}$.
This energy landscape will be random, or in other words, we are given a probability space $(\Omega, \mathcal{A}, \mathbb{P})$, where
\begin{equation}
  \label{e:Omega}
  \begin{array}{c}
    \Omega = \overline{\mathbb{R}}^{\mathbb{Z}^d} \text{ is the space of energy landscapes
      on $\mathbb{Z}^d$},\\
    \mathcal{A} \text{ is the product $\sigma$-algebra on $\Omega$ and}\\
    \mathbb{P} \text{ is a probability measure on $\mathcal{A}$,}
  \end{array}
\end{equation}
providing the law of the environment.
For simplicity, we will always assume that
\begin{equation}
  \label{e:invariant}
  \text{the law $\mathbb{P}$ is invariant with respect to translations of $\mathbb{Z}^d$.}
\end{equation}

Another important assumption we make on the law $\mathbb{P}$ concerns its long range dependence. Intuitively speaking, we want that, if two events are determined by what happens in far away regions of the space, then they should be roughly independent. This is made precise in the following assumption.

Suppose that there exist $\alpha > 0$ and $\newconstant{c:cov}\useconstant{c:cov} > 0$ such that, for all $L \geq \useconstant{c:cov}$, given any pair of cubes $C_1$ and $C_2 \subseteq \mathbb{Z}^d$ with side length $3L$, within distance at least $L$ from one another,
\begin{equation}
  \label{e:cov}
  \begin{array}{c}
  \medskip
  \text{for any functions $f_1:\mathbb{R}^{C_1} \to [0,1]$ and $f_2:\mathbb{R}^{C_2} \to [0,1]$,}\\
  \textnormal{Cov}(f_1,f_2) \leq L^{-\alpha}.
  \end{array}
\end{equation}
Let us observe that in particular this implies that the environment $\omega$
is mixing, therefore ergodic, under translations of $\mathbb{Z}^d$.
More generally, we will need the assumption below, dealing with $D$ boxes instead of two
\begin{equation}
  \label{e:cov2}
  \begin{array}{c}
  \medskip
  \text{for any functions $f_i: \mathbb{R}^{C_i} \to [0,1]$, for $i = 1, \dots, D$,}\\
  E[f_1 \cdot \dots \cdot f_D] \leq E[f_1] \dots, E[f_D] + L^{-\alpha}.
  \end{array}
\end{equation}
Where we again assume that the boxes $C_i$ have side length $3L$ and mutual distance at least $L$ from each other.
Note that an assumption of the form \eqref{e:cov2} is much weaker than the typical restriction on the environment to be independent.

Given an environment $\omega \in \Omega$, we are going to consider a
deterministic motion of a surface on $\mathbb{Z}^d$, subject to the
potential $\omega$.  To make this more precise, let us introduce some
definitions.

We write $\overline{\mathbb{Z}} = \mathbb{Z} \cup \{-\infty\}$ and call any given function $S:\mathbb{Z}^{d-1} \to \overline{\mathbb{Z}}$ a \textit{surface}.
Note that we do not impose any regularity conditions on $S$.
Intuitively speaking, we will regard the graph of $S$ (which is a subset of $\mathbb{Z}^{d-1} \times \bar{\mathbb{Z}}$) as a physical surface moving through the environment $\omega$.

Let us introduce the discrete partial derivatives of an interface $S:\Z^{d-1} \to \Z$. Define the set of canonical directions $E = \{\pm e_k; k = 1, \dots, d-1 \}$. Given $x \in \Z^{d-1}$ and $e \in E$, the discrete derivative of $S$ at $x$ in direction $e$ is given by $\partial_e S(x) = S(x + e) - S(x)$.


Given an initial surface $S_0$, an environment $\omega \in \Omega$ and an
 \textit{update function}
$F:\mathbb{Z}^E \times \overline{\mathbb{R}} \to \{0,1\}$,
the evolution of the surface $S_t$ is determined as follows.
Suppose that $S_t$ has been defined for some integer time $t \geq 0$. Then,
\begin{equation}
  \label{e:evolve}
  S_{t+1}(x) = S_t(x) + F \Big(\partial_{e_1} S_t(x), \partial_{-e_1} S_t(x), \dots, \partial_{e_{d-1}} S_t(x), \partial_{-e_{d-1}} S_t(x); \omega(x,S_t(x)) \Big).
\end{equation}
Intuitively speaking, $F$ reads the derivatives of $S_t$ at $x$ in all directions and the current potential at $(x, S_t(x))$, then it
returns one if the surface is supposed to move in the position $x$ at time
$t+1$ and zero otherwise.
Let us convention that $-\infty + 1 = -\infty$ and that $F$ equals zero if any partial derivative is equal to $-\infty$ (recall that these values are also allowed to occur in $S$). Observe that there is no need to evaluate $\omega(x, -\infty)$, since no update is relevant if $S(x) = -\infty$.

Although our techniques can easily deal with the general update functions $F$  defined above, some of the most interesting situations involve a function $F$ solely depending on the discrete Laplacian of the surface $S: \Z^{d-1} \to \Z$
\begin{equation}
  \label{e:Laplacian}
  \Delta S(x) = \sum_{k = 1,\dots, d-1} S(x + e_k) - 2 S(x) + S(x - e_k).
\end{equation}
In such situations, the notation for the evolution rule can be simplified to the following
\begin{equation}
  \label{e:SLaplace}
  S_{t+1}(x) = S_t(x) + F\Big(\Delta S_t (x), \omega(x, S_t(x))\Big),
\end{equation}
where $F:\Z \times \overline{R} \to \{0,1\}$ is the function governing this simplified update rule.

We would like to stress a few consequences of the rule \eqref{e:evolve}:
\begin{align}
  \label{e:determ}
  &\text{the whole evolution of the initial surface $S_0$
  depends solely on $\omega$,}\\
  \nonumber
  &\text{not involving any additional randomness,}\\
  \label{e:simult}
  &\text{the surface moves simultaneously at every location where it is
    allowed to,}\\
  \label{e:onebyone}
  &\text{$S_t(x)$ and $S_{t+1}(x)$ differ by at most one for every $t \geq
    0$,}\\
  \label{e:homogen}
  &\text{our evolution rule is translation invariant on $\mathbb{Z}^d$.}
\end{align}
We chose the exact evolution mechanism described in \eqref{e:evolve} in order to simplify our proofs.
However, we believe that the techniques developed in this paper could be extended to other problems involving for instance a random update function.

Let us now make  further assumptions on the function $F$ determining
the dynamics above. Suppose from now on that
\begin{align}
  \label{e:Fmonotone}
  &\text{$F$ is non-decreasing on all its coordinates.}\\
  \label{e:Fpinned}
  &\text{$F(a_1, \dots, a_{2(d-1)}, -\infty) = 0$ for all values of $a_1, \dots, a_{2(d-1)}$,}
\end{align}
Assumption \eqref{e:Fmonotone} intuitively says that the smaller the energy $\omega(x, S_t(x))$ and the lower the position of the surface at neighboring sites, the harder it will be for $S_t(x)$ to advance. Let us emphasize that \eqref{e:onebyone} and \eqref{e:Fmonotone} imply the attractiveness of our dynamics, i.e. $S_t \leq S_t'$ for every $t \geq 1$, whenever $S_0 \leq S_0'$.
Also, assumption \eqref{e:Fpinned} states that an infinitely deep well can never be overcome, pinning the surface forever on that location.

\bigskip

We are going now to state the finite size criterion which will be used later in our renormalization approach.
First, we need to define, for integers $h, L \geq 1$, the boxes
\begin{equation}
\label{e:CB}
\begin{array}{c}
C^{h,L} = [0,3hL)^{d-1} \times [0, H_L) \cap \mathbb{Z}^d \text{ and}\\
B^{h,L} = [hL,2hL)^{d-1} \times [0,L) \cap \mathbb{Z}^d,
\end{array}
\end{equation}
where $H_L = L^{a+1}$ with $a$ chosen to be integer for notational simplicity. See Figure~\ref{f:boxes}.

\begin{figure}[ht]
  \begin{center}
    \includegraphics[angle=0, width=0.5\textwidth]{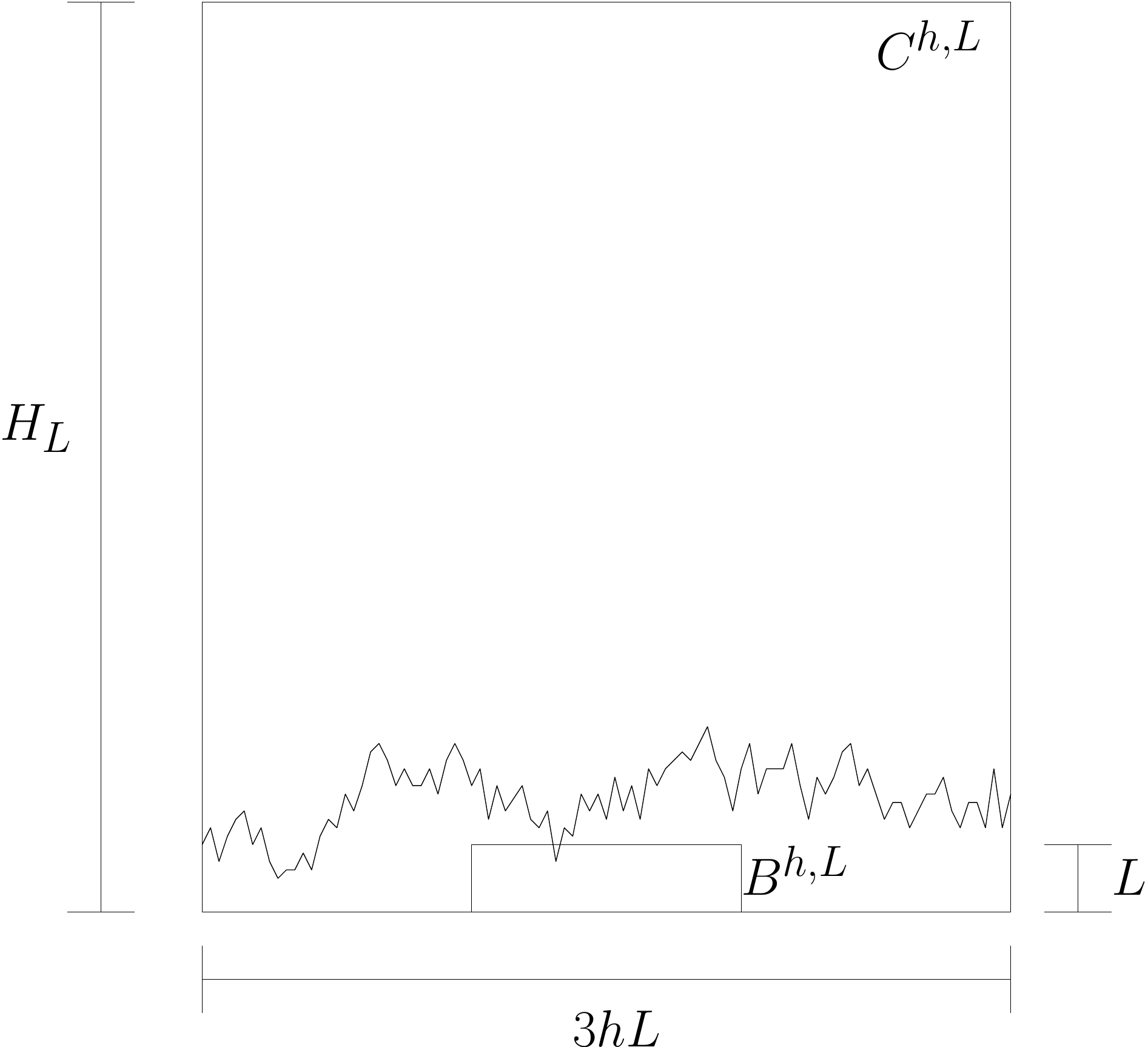}\\
    \caption{\small The boxes $B^{h,L}$ and $C^{h,L}$ appearing in \eqref{e:CB} (for $d = 2$) and a blocking surface as in hypothesis \eqref{e:W}.
Note that the typical box height will be chosen such that $H_L \gg L$ (contrary to what is depicted).
    }
    \label{f:boxes}
  \end{center}
\end{figure}

We will also consider interface evolutions restricted to  the domain $C^{h,L}$ by forcing the environment $\omega$ to be infinite outside  $C^{h,L}$.
We say that a given surface $S_t$ is obstructed at $x$ if $S_t(x)$ does not change under one step of our update rule \eqref{e:evolve}, i.e. $S_{t+1}(x) = S_t(x)$.
Given $h$ and $L$ as in \eqref{e:CB}, we say that a surface $S$ blocks $C^{h,L}$ if the following happens:
\begin{itemize}
 \item the graph of $S$ intersects $B^{h,L}$,
 \item $S(x)$ is non-negative for every $x$ in $[0, 3hL)^{d-1}$ and
 \item for every $x \in [0, 3hL)^{d-1}$ such that $S(x) \leq H_L$, $S$ is obstructed in $x$,
\end{itemize}
see Figure~\ref{f:boxes}.

We would like to emphasize that the existence of a surface blocking $C^{h,L}$ is an event that can be determined by solely analyzing the configuration of the environment inside $C^{h,L}$. This localization property will be used in conjunction with the decoupling bound  \eqref{e:cov2}.

In the unpinned regime, one expects that the effective force on the interface is positive and that
the typical interfaces, when the evolution is restricted to $C^{h,L}$,   look like a bell with maximal height in the middle of the block.
Interfaces blocking $C^{h,L}$ should be rare in the unpinned regime.
The details of their definition
were designed in order to balance between being easy to prove in examples, see Section~\ref{s:perturb} and being strong enough to imply our main result, see \eqref{e:vkdecay}.


\bigskip

We are now in position to state our criterion \newconstant{c:unpinning}

\medskip

\noindent
{\bf Finite size criterion.}
Suppose that
\begin{equation}
 \label{e:W}
 \begin{array}{c}
   \text{there exist $\useconstant{c:unpinning}, \rho > 0$ and an integer $h \geq 2$, such that, for every $L \geq \useconstant{c:unpinning}$,}\\
   \mathbb{P}\big[ \text{there exists a surface $S$ blocking $C^{h,L}$} \big] \leq L^{-\rho}.\\
 \end{array}
\end{equation}

\begin{remark}

Under various scenarios, we expect the decay in \eqref{e:W} to be exponential and refer to Section~\ref{s:perturb} for some examples.
In dimension two, the finite size criterion resembles a percolation statement on decay of connectivity, therefore it could, in principle, be proved using percolation techniques.
For the case $d \geq 3$, we can still relate condition \eqref{e:W} to surface percolation such as studied in \cite{DDGHS}, see Section~\ref{s:perturb} for details.
\end{remark}

The ballistic evolution of the interface is a consequence of this criterion
\begin{theorem}
\label{c:speed}
Let $S_0$ be the flat surface given by $S_0(x) = 0$ for every $x \in \mathbb{Z}$. Then, using that \eqref{e:cov2} and \eqref{e:W} hold with $\alpha$, $\rho$ and $D$ satisfying \eqref{e:assumpt1}--\eqref{e:assumpt2}, there exists a constant \newconstant{c:speedbound} $\useconstant{c:speedbound}(\alpha, d, D, h, \gamma, \useconstant{c:cov}, \useconstant{c:unpinning}) > 0$ such that
\begin{equation}
 \label{e:speed}
 \liminf_{t \to \infty} \frac{S_t(0)}t \geq \useconstant{c:speedbound}, \text{ $\mathbb{P}$-a.s.}
\end{equation}
\end{theorem}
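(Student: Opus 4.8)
The plan is to set up a multi-scale renormalization scheme indexed by a sequence of length scales $L_k$ growing roughly geometrically (say $L_{k+1} = h L_k$ or some fixed power, with $L_0$ the base scale from \eqref{e:W}), and to propagate a "good box" estimate from scale $k$ to scale $k+1$. At scale $k$ I would declare a box at level $k$ to be \emph{bad} if, when the dynamics is restricted (by infinite environment outside) to that box, the interface gets blocked before crossing it — i.e. a blocking surface as in \eqref{e:W} exists inside it. The base case is exactly the finite-size criterion \eqref{e:W}: $\mathbb{P}[\text{bad at scale }0] \leq L_0^{-\rho}$. The heart of the argument is the recursion: a box at scale $k+1$ can be bad only if several (a bounded number $D$, determined by the geometry, hence the role of $D$ in \eqref{e:cov2}) essentially disjoint sub-boxes at scale $k$ are bad. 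Using the decoupling inequality \eqref{e:cov2} to replace the joint probability by the product up to an additive error $L_k^{-\alpha}$, one gets a recursion of the shape
\begin{equation}
  p_{k+1} \leq C\, p_k^{\,m} + C\, L_k^{-\alpha},
\end{equation}
with $m \geq 2$ the number of bad sub-boxes forced. Choosing the parameters so that $\rho$ is large enough relative to $\alpha$ and the growth rate (this is where conditions \eqref{e:assumpt1}--\eqref{e:assumpt2} enter), this recursion is self-improving and yields $p_k \to 0$ fast, in fact $\sum_k p_k < \infty$.

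The geometric/deterministic core — and I expect this to be the main obstacle — is proving the combinatorial implication: \emph{if the interface is blocked inside a scale-$(k+1)$ box, then it must be blocked inside a specified collection of scale-$k$ sub-boxes.} This requires a genuinely new coarse-graining argument for interface motion (the authors flag this as a novelty). The idea would be: if the scale-$(k+1)$ interface fails to advance, look at a horizontal slab of sub-boxes; monotonicity/attractiveness (from \eqref{e:onebyone} and \eqref{e:Fmonotone}) lets one compare the restricted evolution in a sub-box with the ambient evolution, so a blocking surface at the large scale projects down to obstructed configurations at the small scale. The non-trivial point is that a single bad sub-box is not enough — the elastic (Laplacian) term can push the interface over an isolated obstacle — so one must argue that blocking at the large scale forces a whole connected "membrane" of bad sub-boxes spanning the box, which in turn contains many disjoint bad sub-boxes at distance $\geq L_k$ from one another so that \eqref{e:cov2} applies. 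Getting the geometry of this membrane right, and ensuring the forced sub-boxes are far enough apart for decoupling while still being numerous enough ($m \geq 2$) to close the recursion, is the delicate balance; the height $H_L = L^{a+1} \gg L$ is presumably chosen precisely to give the interface enough vertical room that a blocking membrane is forced to be large.

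Once $\sum_k \mathbb{P}[\text{origin's scale-}k\text{ box is bad}] < \infty$, Borel--Cantelli gives that, $\mathbb{P}$-a.s., only finitely many scales around the origin are bad; let $k_0$ be the last bad one. Then for the \emph{actual} (unrestricted) dynamics started from the flat surface $S_0 \equiv 0$, I would show by induction on blocks that the interface traverses each good scale-$k$ box (for $k > k_0$) in a controlled number of steps — at scale $k$, crossing a good box costs at most some time $T_k$ polynomial in $L_k$ — using attractiveness to dominate the true evolution from below by the restricted evolutions in a sequence of good boxes stacked vertically. Summing the vertical displacement $\sim L_k$ gained per time $\sim T_k$ over a long time horizon $t$, and using that the bad boxes are sparse (their total vertical extent has density zero along the relevant scale), one concludes $S_t(0) \geq \useconstant{c:speedbound}\, t$ for all large $t$, with $\useconstant{c:speedbound}$ depending only on the listed parameters. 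The quantitative bookkeeping of $T_k$ versus $L_k$ — i.e. showing the speed does not degrade to zero as one averages over scales — is routine once the recursion above is established, but must be done carefully to extract a strictly positive constant rather than merely transience.
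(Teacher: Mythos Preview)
Your outline has the right skeleton (multi-scale boxes, a recursion closed via the decoupling assumption, Borel--Cantelli at the end), but it misses the central idea that makes the speed strictly positive: the distinction between a box being \emph{blocked} (the event in \eqref{e:W}) and a box being \emph{slow} (crossing time exceeding $r_k L_k$ for a bounded sequence $r_k$). You set ``bad $=$ blocked'' and propose to propagate $p_k := \mathbb{P}[\text{blocked at scale }k]$ by recursion. But the criterion \eqref{e:W} already gives $p_k \leq L_k^{-\rho}$ for \emph{every} scale directly --- no recursion is needed there. The real difficulty is elsewhere: knowing that a box is not blocked only tells you the restricted dynamics eventually exits, and the sole deterministic bound on the exit time is the volume bound $T_m \leq (3h)^{d-1} L_k^{d+a}$ (since the surface can perform at most $|C_m|$ updates). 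That gives speed $\sim L_k^{1-d-a} \to 0$ across good boxes, so your final step (``the quantitative bookkeeping of $T_k$ versus $L_k$ is routine'') does not yield a positive constant.

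The paper's fix is to run the recursion on the \emph{slow} probability $w_k = \mathbb{P}[T_m > r_k L_k]$, with $r_k$ defined so that $r_k - r_{k-1}$ absorbs the cost of at most $O(L_{k-1}^a D)$ bad layers and $\sup_k r_k < \infty$. The deterministic lemma is then a dichotomy rather than your ``blocked forces many blocked sub-boxes'': if a scale-$k$ box is slow, either more than $3L_{k-1}^a D$ horizontal layers each contain a slow scale-$(k-1)$ sub-box, \emph{or} some sub-box is blocked. The first alternative yields, after thinning to $D$ well-separated layers, the term $w_{k-1}^D + L_{k-1}^{-\alpha(a+1)}$ via \eqref{e:cov2}; the second yields the additive term $(3L_{k-1}^{\gamma-1})^d v_{k-1}$, controlled directly by \eqref{e:W}. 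The scales are $L_{k+1} = L_k^\gamma$ (super-exponential, not geometric), which is what makes the error terms summable and the recursion close under \eqref{e:assumpt1}--\eqref{e:assumpt2}. In short: your ``bad'' event is the wrong one to renormalize, and without the slow/blocked dichotomy and the $r_k$ bookkeeping you cannot extract a linear-in-$L_k$ crossing time.
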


See Remark~\ref{r:constants} on the validity of conditions \eqref{e:assumpt1}--\eqref{e:assumpt2}.

\section{Renormalization}
\label{s:renorm}

Fix an integer $\gamma$ such that $\gamma > d + 2 a \geq 2$ (recall that the height of $C^{h,L}$ is $H_L = L^{1+a}$)
 and let
\begin{gather}
 \label{e:Lk}
 L_0 \geq 100, \quad  \text{ and} \quad
 L_{k+1} = L_k^\gamma, \quad \text{ for $k \geq 0$.}
\end{gather}

Our renormalization scheme will be defined in terms of a collection of boxes (resembling the ones appearing in \eqref{e:CB}).
We first define the sets
\begin{equation}
  \label{e:Mk}
  M_k = \{k\} \times \mathbb{Z}^d, \text{ for $k = 0,1,\dots$}
\end{equation}
whose elements will label the boxes at a given scale $k$ in our renormalization
scheme.
More precisely, for $m = \big(k, (i,j) \big) \in M_k$ (where $i \in \Z^{d-1}$ and $j \in \Z$), let
\begin{equation}
  \label{e:Bm}
  B_m = [0, h L_k)^{d-1} \times [0, L_k) + (i h L_k, jL_k) \subset \mathbb{Z}^d.
\end{equation}
The collection of boxes $\{B_m\}_{m \in M_k}$ form a disjoint tiling of
the whole lattice $\mathbb{Z}^d$ by rectangles of base length $h L_k$ and height $L_k$. We also need to define, given $m = \big(k, (i,j) \big) \in M_k$,
\begin{equation}
  \label{e:Cm}
  \begin{array}{c}
    C_m = \cup_{m'}B_{m'}, \text{ for all $m' = \big(k, (i+i',j+j') \big)$}\\
    \text{with $i' \in \Z^{d-1}$ with $\lVert i \rVert_\infty \leq 1$ and $j' \in \{0,\dots, L_k^a \}$,}
  \end{array}
\end{equation}
where $L_k^a = H_{L_k} / L_k$.
Note that the collection $\{C_m\}_{m \in M_k}$ is composed of overlapping
boxes of base length $3hL_k$ and height $H_{L_k}$, see Figure~\ref{f:boxes}.
Observe the similarity between the above definitions and \eqref{e:CB}.

As above, we will routinely denote points in $\Z^d$ by $(x,y)$, with $x \in \Z^{d-1}$ and $y \in \Z$.

Let us now start to introduce some definitions related to the dynamics of
the surface on the above boxes. First we define, for a given index $m \in
M_k$, the modified environment $\omega_m$ given by
\begin{equation}
  \label{e:omegam}
  \omega_m(x) =
  \begin{cases}
    \omega(x),  &\qquad \text{ if $x \in C_m$ and}\\
    -\infty,   &\qquad \text{ otherwise.}
  \end{cases}
\end{equation}
It is clear from the above that the evolution of $S_0$ under $\omega_m$ solely
depends on the configuration of $\omega$ inside $C_m$ and also that
$S_t$ evolves slower under the environment $\omega_m$ than under $\omega$, see below \eqref{e:Fpinned}.

For $k \geq 0$ and $m = (k, (i,j)) \in M_k$, we also introduce the surface
\begin{equation}
 \label{e:Sm}
 S^m_0(x) =
 \begin{cases}
  j L_k, \qquad & \text{for $x$ in the basis of $C_m$ (projection of $C_m$ into $\Z^{d-1}$),}\\
  -\infty & \text{otherwise.}
 \end{cases}
\end{equation}
The definition of $S^m_0$ taking value $-\infty$ for $x$ outside the basis of $C_m$ was made in order to slow down its dynamics, so that it will serve as a lower bound to any other surfaces starting above this platform thanks to the monotonicity of the dynamics, see \eqref{e:Fmonotone}. For positive times $t \geq 1$, we define $S^m_t$ using the evolution rule in \eqref{e:evolve}, starting at $S^m_0$ and evolving under the potential $\omega_m$.

In order to bound from below the speed of the surface $S^m_t$, we need to introduce the time when this surface has first crossed the box $B_m$ (under the potential $\omega_m$). More precisely, given $m \in M_k$, let
\begin{equation}
  \label{e:Tm}
  \begin{split}
  T_m(\omega) = \inf \big\{t; &\text{ the surface $S^m_t$ is completely above the box $B_m$}\big\},
  \end{split}
\end{equation}
which is defined to be infinity if the above set is empty. We stress once again that the time
$T_m(\omega)$ only depends on the configuration $\omega$ inside the
box $C_m$, since we restrict our dynamics to $\omega_m$ in the above definition.

Our main objective in what follows it to bound $T_{(k,(0,0))}(\omega)$ from above
with high probability (under $\mathbb{P}$) as $k$ grows. This will
imply that a flat surface moves with positive speed, see Theorem~\ref{c:speed}.

The main ingredient of the proof is to relate events occurring at scale $k+1$ with their corresponding event at scale $k$.
This is done with the help of the following lemma, which bounds
the time $T_m$ in terms of a sum of the maximum times to cross each horizontal level of boxes at the previous scale, see Figure~\ref{f:layers}. Before stating the lemma, let us define, given $m \in M_k$ with $k \geq 1$, the indices of layers
\begin{equation}
  \label{e:Jm}
  J_m = \Big\{j \in \mathbb{Z}; B_{m'} \subseteq B_m, \text{ for some $m' = \big(k-1, (i,j)\big)$}\Big\}.
\end{equation}
Note that $|J_m| = L_{k-1}^{\gamma-1}$, see \eqref{e:Lk}.

\begin{lemma}
\label{l:supT}
For any given environment $\omega \in \Omega$ and index $m \in M_k$ with $k
\geq 1$, we have
\begin{equation}
  \label{e:supT}
  T_m(\omega) \leq \sum_{j \in J_m} \sup_{m' \in M_m^{j}} T_{m'}(\omega_{m'}),
\end{equation}
where $M_m^j = \{m' = \big(k-1, (i,j) \big); i \in \mathbb{Z}^{d-1}, C_{m'} \subseteq C_m \}$.
\end{lemma}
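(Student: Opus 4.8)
The plan is to build, layer by layer, a sequence of surfaces that lie below $S^m_t$ at all times and whose successive crossing times telescope into the right-hand side of \eqref{e:supT}. First I would set up the bookkeeping: write $J_m = \{j_0 < j_1 < \dots < j_{r-1}\}$ with $r = |J_m| = L_{k-1}^{\gamma-1}$, and for each layer index $j_\ell$ consider the finite family $M_m^{j_\ell}$ of scale-$(k-1)$ boxes $m'$ with $C_{m'} \subseteq C_m$. Set $\tau_\ell = \sup_{m' \in M_m^{j_\ell}} T_{m'}(\omega_{m'})$, so the claim is $T_m(\omega) \le \sum_{\ell} \tau_\ell$. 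Define the partial sums $\sigma_0 = 0$ and $\sigma_{\ell+1} = \sigma_\ell + \tau_\ell$; the heart of the proof is the inductive statement that at time $\sigma_\ell$ the surface $S^m_t$ (evolving under $\omega_m$ from the platform at height $jL_k$) is completely above the $\ell$-th layer of scale-$(k-1)$ boxes inside $B_m$, i.e. above height $j_\ell L_{k-1} + L_{k-1}$ on the relevant base. Taking $\ell = r$ gives that $S^m$ has crossed all of $B_m$ by time $\sigma_r = \sum_\ell \tau_\ell$, which is exactly \eqref{e:supT}.

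The inductive step is where monotonicity and attractiveness of the dynamics do the work. Assume at time $\sigma_\ell$ the surface $S^m_t$ sits above the top of layer $j_{\ell-1}$, hence above the bottom $j_\ell L_{k-1}$ of every box $m' \in M_m^{j_\ell}$. For each such $m'$, the surface $S^{m'}_0$ defined in \eqref{e:Sm} is the platform at height $j_\ell L_{k-1}$ on the base of $C_{m'}$ and $-\infty$ elsewhere; since $C_{m'} \subseteq C_m$, the environment $\omega_{m'}$ is pointwise $\le \omega_m$ (both agree on $C_{m'}$ and $\omega_{m'} = -\infty$ off $C_{m'}$), so by \eqref{e:Fmonotone} and the remark below \eqref{e:Fpinned} the evolution under $\omega_{m'}$ is dominated by that under $\omega_m$. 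Combined with $S^{m'}_0 \le S^m_{\sigma_\ell}$ (the $-\infty$ values off the base make this automatic, and on the base we use the induction hypothesis) and attractiveness (\eqref{e:onebyone}+\eqref{e:Fmonotone}), we get $S^{m'}_{\sigma_\ell + s} \le S^m_{\sigma_\ell + s}$ for all $s \ge 0$ — here I use that the $\omega_m$-dynamics is time-homogeneous so shifting the start time by $\sigma_\ell$ is harmless. By definition of $T_{m'}$ and $\tau_\ell$, at time $s = \tau_\ell$ each $S^{m'}$ is above $B_{m'}$; since the boxes $\{B_{m'} : m' \in M_m^{j_\ell}\}$ tile the $\ell$-th horizontal slab of $B_m$, taking the supremum/union over $m' \in M_m^{j_\ell}$ shows $S^m_{\sigma_{\ell+1}} = S^m_{\sigma_\ell + \tau_\ell}$ is above the top of layer $j_\ell$, closing the induction.

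The main obstacle I expect is the careful verification of the domination $S^{m'}_{\sigma_\ell+s} \le S^m_{\sigma_\ell+s}$, in particular checking that restricting the environment to the smaller box $C_{m'}$ and starting from the lower, $-\infty$-padded platform genuinely yields a lower bound — this needs the attractiveness statement applied not to two surfaces under the \emph{same} environment but under $\omega_{m'} \le \omega_m$, so one really wants the monotone statement ``if $S_0 \le S_0'$ and $\omega \le \omega'$ pointwise then $S_t \le S_t'$ for all $t$'', which follows by a simultaneous induction on $t$ from \eqref{e:Fmonotone} and \eqref{e:onebyone}. A secondary point to be careful about is that the base of $C_{m'}$ may stick out horizontally beyond $B_m$ while still lying inside $C_m$; this is fine because we only claim control of $S^m$ above $B_m$, and the tiling property of the $B_{m'}$'s over the slab is what is actually used. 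Everything else — the arithmetic $|J_m| = L_{k-1}^{\gamma-1}$ from \eqref{e:Lk}, and the fact that ``above $B_m$'' is reached once all layers are crossed — is routine.
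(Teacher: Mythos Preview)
Your overall strategy---layer-by-layer induction, comparing $S^m$ with the platform surfaces $S^{m'}$ via monotonicity in both the initial condition and the environment---is exactly the paper's. But the inductive claim you state does not close, and the issue is precisely the horizontal point you flagged and then dismissed.

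To dominate $S^{m'}_0$ for $m' \in M_m^{j_\ell}$ you need $S^m_{\sigma_\ell}(x) \ge j_\ell L_{k-1}$ on the \emph{entire base of $C_{m'}$}, which is three scale-$(k-1)$ columns wide, not just on the base of $B_{m'}$. Conversely, the output of the inductive step only tells you that $S^m_{\sigma_{\ell+1}}$ has crossed the boxes $B_{m'}$ for $m' \in M_m^{j_\ell}$; since membership in $M_m^{j_\ell}$ requires $C_{m'} \subseteq C_m$, the outermost column on each side of $C_m$ is never covered by such a $B_{m'}$. Hence the horizontal region on which you control $S^m$ necessarily \emph{shrinks by one column on each side at every layer}. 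Your sentence ``this is fine because we only claim control of $S^m$ above $B_m$'' addresses the conclusion, not the hypothesis: to reach the top layer you must have had control on a strictly wider region at every previous layer.

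The paper's proof builds this shrinkage into the induction: its claim \eqref{e:stairs} asserts that after $n$ layers the surface has surpassed the boxes over columns $[i_o+n+1,\, i_o+3L_{k-1}^{\gamma-1}-n]^{d-1}$, a pyramid that loses one column per side per step (see Figure~\ref{f:layers}). The arithmetic that makes this work is that $C_m$ is $3L_{k-1}^{\gamma-1}$ columns wide while $B_m$ is $L_{k-1}^{\gamma-1}$ columns wide and $|J_m|=L_{k-1}^{\gamma-1}$; after $L_{k-1}^{\gamma-1}$ steps the remaining middle third is exactly the base of $B_m$. Once you replace your inductive hypothesis by this shrinking one, the rest of your argument (the two-parameter monotonicity $S_0\le S_0'$, $\omega\le\omega'$ $\Rightarrow$ $S_t\le S_t'$, and the time shift) goes through unchanged.
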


\begin{proof}
Recall the definition of $J_m$ in \eqref{e:Jm}. For this proof, we will need the analogous set
\begin{equation}
 \label{e:Im}
 I_m = \Big\{i \in \mathbb{Z}^{d-1}; B_{m'} \subseteq C_m, \text{ for some $m' = \big(k-1, (i,j)\big)$}\Big\}.
\end{equation}
Write $J_m = \big\{j_o + 1, j_o + 2, \dots, j_o + L_{k-1}^{\gamma - 1}  \big \}$ and $I_m = \big[i_o + 1, i_o + 3  L_{k-1}^{\gamma - 1}  \big]^{d-1} \cup \Z^{d-1}$.
We will now prove the following claim by induction in $n = 0, \dots,  L_{k-1}^{\gamma - 1} $,
\begin{equation}
 \label{e:stairs}
 \begin{array}{c}
 \text{after time ${\textstyle \sum\limits_{l = 1}^{n}} \sup_{m' \in M_m^{j}} T_{m'}(\omega_{m'})$, the surface will have surpassed all boxes}\\
 \text{ $B_{{\displaystyle(}k-1, (j_o + n,i){\displaystyle)}}$, with $i \in \Big[i_o + n + 1, i_o + 3  L_{k-1}^{\gamma - 1}  - n \Big]^{d-1}$,}
 \end{array}
\end{equation}
see Figure~\ref{f:layers}.

\begin{figure}[ht]
  \psfrag{n0}[0][0][1.5][0]{$n=0$}
  \psfrag{n1}[0][0][1.5][0]{$n=1$}
  \psfrag{Bk}[0][0][2][0]{$B_m$}
  \psfrag{Ck}[0][0][2][0]{$C_m$}

\end{figure}

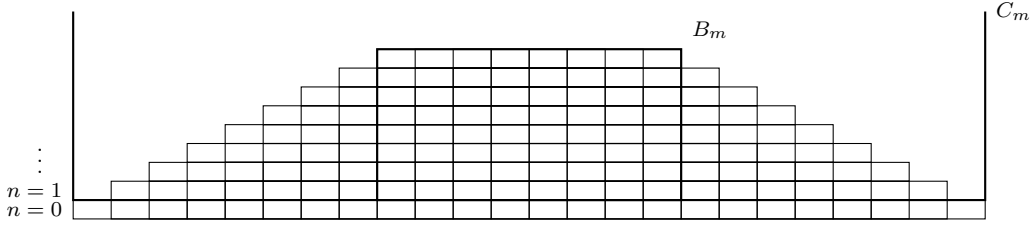
\begin{figure}[ht]
  \label{f:layers}
  \centering
  \begin{tikzpicture}[yscale=.25, xscale=.50]
  \draw[thick] (1, 11) -- (1,1) -- (25,1) -- (25,11);
  \draw[thick] (9, 1) rectangle (17, 9);
  \edef\z{0}
  \foreach \y in {0,...,8} {
    \pgfmathparse{21 - \y}
    \xdef\z{\pgfmathresult}
    \foreach \x in {\y,...,\z} {
      \draw[thin] (\x + 1, \y) rectangle (\x + 4, \y + 1); } }
  \node at (1,.5) [left] {\tiny$n = 0$};
  \node at (1,1.5) [left] {\tiny$n = 1$};
  \node at (.5,3.5) [left] {\tiny$\vdots$};
  \node at (17,9) [above right] {\tiny$B_m$};
  \node at (25,11) [right] {\tiny$C_m$};
  \end{tikzpicture}
    \caption{\small The various layers of boxes indexed in \eqref{e:stairs}. Each horizontal layer corresponds to a value $n$.}
\end{figure}

We start by observing that for $n = 0$ the claim \eqref{e:stairs} is trivially true, since for $n = 0$, \eqref{e:stairs} refers to time zero, when the surface ($S^m_0$) passes in the bottom of $C_m$ (see \eqref{e:Sm}). Therefore, $S^m_0$ is by definition above all $B_{(k-1,(j_o,i))}$ with $i \in I_m$ showing that \eqref{e:stairs} holds for $n = 0$.

Let us now assume that the claim \eqref{e:stairs} holds for $n - 1 <  L_{k-1}^{\gamma - 1}$. Then we have that at time $\tau_{n-1} = {\textstyle \sum_{l = 1}^{n-1}} \sup_{m' \in M_m^{j}} T_{m'}(\omega_{m'})$ the surface $S^m_{\tau_{n-1}}$ is above all the boxes $B_{(k-1, (j_o + n,i))}$, with $i \in [i_o + n, i_o + 3  L_{k-1}^{\gamma - 1}  - (n-1)]^{d-1} \cap \Z^{d-1}$. Then it is not difficult to see that $S^m_{\tau_{n-1}}$ is therefore above all the surfaces $S^{m'}_0$ where
$m'= \big(k-1, (j_0 + n, i) \big)$ with $i = i_o + n + 1, \dots, i_o + 3  L_{k-1}^{\gamma - 1} - n$. Thus, the statement \eqref{e:stairs} for $n$ is a simple consequence of the definition of $T_m$ in \eqref{e:Tm} together with the monotonicity of our dynamics in the initial surface. This proves \eqref{e:stairs}.

To finish the proof of the lemma, one should simply observe that \eqref{e:supT} is implied by \eqref{e:stairs} with $n = L_{k-1}^{\gamma - 1}$.
\end{proof}


We also define recursively, given any $r_0 > 0$ and an integer $D \geq 2$,
\begin{equation}
 \label{e:rk}
 r_{k} =
 r_{k-1} + D (3h)^d \; \tfrac{L_{k-1}^{d+2a}}{L_k}, \text{ for $k \geq 1$.}
\end{equation}
The integer $D$ will be a parameter in the renormalization procedure related to the minimal number of defects to slow down the dynamics restricted to a box (see Lemma \ref{l:cure}).
The sequence $\{ r_k \}$ will bound from below the inverse speed of the surface $S_t$ at scale $k$. More precisely, we will show that for $m \in M_k$, then $T_m \leq r_k L_k$ with high probability as $k$ grows, see Theorem~\ref{t:sdkdecay}. Let us first note that the second term in the above definition is a vanishing error term. Indeed,
\begin{equation}
\frac{L_{k-1}^{d+2a}}{L_{k}} = \frac{L_{k-1}^{d+2a}}{L_{k-1}^{\gamma }} =
L_{k-1}^{-(\gamma-d-2a)}.
\end{equation}
Moreover, the above terms are summable in $k$ (recall that $\gamma > d+ 2a$ and $L_k$ grows super-exponentially). This allows us to conclude that \newconstant{c:boundrk}
\begin{equation}
  \label{e:boundrk}
  \sup_{k \geq 0} r_k \leq \useconstant{c:boundrk}(r_0, L_0, h, \gamma, d, D).
\end{equation}

Given $k \geq 0$ and $m \in M_k$,
\begin{equation}
 \label{e:slowdefect}
 \begin{array}{l}
  \text{we say that the box $C_m$ is slow if $T_m > r_k L_k$.}\\
 \end{array}
\end{equation}
We also define the event and probability corresponding to a box being slow, i.e.
\begin{equation}
  \label{e:slow}
  \begin{array}{c}
    W_m = \{T_m > r_k L_k\} \subseteq \Omega \text{ and}\\
    w_k = \sup_{m \in M_k} \mathbb{P}[W_m] = \mathbb{P}[W_{(k,(0,0))}].
  \end{array}
\end{equation}
Our main objective is to show that (for large enough $L_0$ and $r_0$) $w_k$ decays fast as $k$ grows.

Finally, we need to introduce the notion of blocked boxes, which will resemble the definition below \eqref{e:CB}. For $k \geq 0$ and $m \in M_k$, we say that a given box $C_m$ is blocked if $T_m = \infty$. We define
\begin{equation}
 \label{e:blocked}
 \begin{array}{c}
  V_m = \{T_m = \infty \} \text{ and}\\
  v_k = \sup_{m \in M_k} \mathbb{P} [V_m] \overset{\eqref{e:invariant}}{=} \mathbb{P} [V_{(k,(0,0))}]
 \end{array}
\end{equation}
Let us observe that the finite size criterion \eqref{e:W} implies that, if $L_0 \geq \useconstant{c:unpinning}$,
\begin{equation}
  \label{e:vkdecay}
  v_k \leq L_k^{-\rho}, \text{ for every $k \geq 1$.}
\end{equation}
Indeed, if $T_m = \infty$, the evolution of the surface $S^m$ will eventually halt (meaning that $S^m_{t} = S^m_{s}$, for every $t \geq s$). In this case, we can see that the surface $S^m_s$ blocks $C_m$, so that \eqref{e:vkdecay} is in fact a consequence of \eqref{e:W}.

\bigskip

It is a crucial observation that, for $k \geq 0$ and $m \in M_k$,
\begin{equation}
  \label{e:Tmfinite}
  \text{if $T_m < \infty$, then $T_m$ must be smaller or equal to $(3 h)^{d-1} L_k^{d+a}$}.
\end{equation}
The above statement is a consequence of the evolution mechanism
chosen in \eqref{e:evolve}. Indeed, the surface cannot perform more than
$3^{d-1} (h L_k)^{d-1} H_{L_k}$ updates in the environment $\omega_m$, since $3^{d-1} (h L_k)^{d-1} H_{L_k}$ is the total
number of points in $C_m$. Since $H_{L_k} = L_k^{a+1}$, at time $(3 h)^{d-1} L_k^{d+a}$, the surface $S^m$ will be completely obstructed under $\omega_m$ and it will either have surpassed $B_m$ (in which case $T_m \leq
(3 h)^{d-1} L_k^{d+a}$) or not (implying that $T_m = \infty$). This proves \eqref{e:Tmfinite}.

\medskip

Our renormalization argument is based on the following fact: the only way in which a box at scale $k$ can be slow is by having several sub-boxes at the previous scale $k-1$ which are slow or at least one which is blocked. This is made precise in the following Lemma.

Recall the definitions of $J_m$ in \eqref{e:Jm} and $M_m^j$ below
\eqref{e:supT}.
\begin{lemma}
\label{l:cure}
Fix $m \in M_k$ with $k \geq 1$, and suppose that $W_m$
holds. Then either
\begin{align}
 \label{e:Zslow}
 & \text{more than $3 L_{k-1}^a D$ indices $j \in J_m$ contain a slow box ($C_{m'}$ with $m' \in M_m^j$),}\\
 \label{e:oneblocked}
 & \text{or there exists at least one $m' \in M_k$ such that $C_{m'} \subset C_m$ and $T_{m'} = \infty$.}
\end{align}
\end{lemma}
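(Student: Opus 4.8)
The plan is to argue by contrapositive: assume that neither \eqref{e:Zslow} nor \eqref{e:oneblocked} holds, and deduce that $W_m$ fails, i.e. $T_m \leq r_k L_k$. So suppose that at most $3L_{k-1}^a D$ indices $j \in J_m$ contain a slow box, and that every sub-box $C_{m'} \subset C_m$ at scale $k-1$ satisfies $T_{m'} < \infty$. We will bound $T_m$ from above using Lemma~\ref{l:supT}, which gives
\begin{equation*}
T_m(\omega) \leq \sum_{j \in J_m} \sup_{m' \in M_m^j} T_{m'}(\omega_{m'}).
\end{equation*}
The strategy is to split the sum over $j \in J_m$ into the ``good'' layers (those containing no slow box) and the ``bad'' layers (those containing at least one slow box, of which there are at most $3 L_{k-1}^a D$), and estimate each contribution separately.

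For the good layers, every $m' \in M_m^j$ has $C_{m'}$ not slow, so by definition \eqref{e:slowdefect} we get $T_{m'} \leq r_{k-1} L_{k-1}$, hence $\sup_{m' \in M_m^j} T_{m'}(\omega_{m'}) \leq r_{k-1}L_{k-1}$; summing over all $|J_m| = L_{k-1}^{\gamma-1}$ layers (a harmless overcount) contributes at most $r_{k-1} L_{k-1}^{\gamma} = r_{k-1} L_k$. For the bad layers, since no sub-box $C_{m'} \subset C_m$ is blocked we have $T_{m'} < \infty$ for every relevant $m'$, so \eqref{e:Tmfinite} applies and gives $T_{m'} \leq (3h)^{d-1} L_{k-1}^{d+a}$; there being at most $3 L_{k-1}^a D$ such layers, their total contribution is at most $3 L_{k-1}^a D \cdot (3h)^{d-1} L_{k-1}^{d+a} \leq D(3h)^d L_{k-1}^{d+2a}$. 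Adding the two bounds and recalling the recursive definition \eqref{e:rk},
\begin{equation*}
T_m \leq r_{k-1} L_k + D(3h)^d L_{k-1}^{d+2a} = \Big( r_{k-1} + D(3h)^d \tfrac{L_{k-1}^{d+2a}}{L_k}\Big) L_k = r_k L_k,
\end{equation*}
which contradicts $W_m$. This proves the lemma.

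The only subtle point — and the main thing to get right rather than a genuine obstacle — is the bookkeeping between the quantity $T_{m'}$ as it appears through Lemma~\ref{l:supT} (which is evaluated under the further-restricted environment $\omega_{m'}$, treating $C_{m'}$ as embedded inside $C_m$) and the notions of ``slow'' and ``blocked'' as defined in \eqref{e:slowdefect} and \eqref{e:blocked} (which refer to the dynamics of $S^{m'}$ under $\omega_{m'}$ with its own initial platform). These coincide by construction: in both cases one runs the surface $S^{m'}_0$ of \eqref{e:Sm} under $\omega_{m'}$ of \eqref{e:omegam} and waits for it to clear $B_{m'}$, so the $T_{m'}$ appearing in \eqref{e:supT} is literally the $T_{m'}$ of \eqref{e:Tm}, and the dichotomy slow/not-slow, blocked/not-blocked is exactly the one entering \eqref{e:slowdefect}–\eqref{e:blocked}. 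One should also double-check that the constant $3$ in front of $L_{k-1}^a D$ together with the base-$3h$ width of the $C$-boxes is what absorbs the factor $(3h)^{d-1}$ into the clean $(3h)^d$ of \eqref{e:rk}; the inequality $3 \cdot (3h)^{d-1} \leq (3h)^d$ is immediate since $h \geq 2 \geq 1$. No convexity, probability, or environment-decoupling input is needed here — this lemma is purely deterministic, and the decoupling \eqref{e:cov2} only enters later when one passes from this combinatorial dichotomy to the recursive bound on $w_k$.
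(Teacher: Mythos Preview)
Your proof is correct and follows exactly the same contrapositive argument as the paper: assume neither alternative holds, invoke Lemma~\ref{l:supT}, bound the good layers by $r_{k-1}L_{k-1}$ each and the at most $3L_{k-1}^a D$ bad layers by $(3h)^{d-1}L_{k-1}^{d+a}$ each via \eqref{e:Tmfinite}, then sum and use \eqref{e:rk} to reach $r_k L_k$. You also correctly read \eqref{e:oneblocked} as referring to sub-boxes at scale $k-1$ (the ``$m' \in M_k$'' in the statement is a typo for $M_{k-1}$), which is precisely how the paper's own proof uses it.
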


\begin{proof}
Suppose that \eqref{e:oneblocked} does not hold. This implies by \eqref{e:Tmfinite} that
\begin{equation}
  \sup_{m' \in M_m^j} T_{m'}(\omega_{m'}) \leq
  (3 h)^{d-1}  L_{k-1}^{d+a}, \text{ for every $j \in J_m$.}
\end{equation}

Let us also assume that \eqref{e:Zslow} does not hold. This implies that
\begin{equation}
  \label{e:twoslow2}
  \begin{array}{c}
  \text{for all but $3 L_{k-1}^a D$ indices $j \in J_m$ we have}\\
  \sup_{m' \in M_m^j} T_{m'}(\omega_{m'}) \leq r_{k-1}L_{k-1}.
  \end{array}
\end{equation}
Joining the two above statements with Lemma~\ref{l:supT}, we can estimate
$T_m$ as follows
\begin{equation}
  \begin{split}
  T_m  \leq \smash{\sum_{j \in J_m} \sup_{m' \in M_m^{j}}}
  T_{m'}(\omega_{m'})
  & \leq |J_m| r_{k-1}L_{k-1} + 3 L_{k-1}^a D (3 h)^{d-1}  L_{k-1}^{d+a} \\
  & \overset{\mathclap{\smash{h > 1}}}\leq L_k \Big( r_{k-1} + D (3h)^d \; \tfrac{L_{k-1}^{d+2a}}{L_k} \Big)\\
  & \overset{\mathclap{\smash{\eqref{e:rk}}}}= r_k L_k.
  \end{split}
\end{equation}
This finishes the proof of Lemma~\ref{l:cure} by contraposition.
\end{proof}

The main idea behind the multi-scale renormalization procedure is to bound
the quantity $w_k$ defined in \eqref{e:slow} by $w_{k-1}$. This is
done with the help of the following

\begin{proposition}
\label{p:recur}
If we define the scale lengths $(L_k)_{k \geq 1}$ as in \eqref{e:Lk} with $L_0 \geq 100 \vee \useconstant{c:cov}$, then for any given $k \geq 1$ we have
\begin{equation}
  \label{e:skrecur}
  w_k
  \leq (3 L_{k-1}^{\gamma-1})^{d D} \Big[ w_{k-1}^D + L_{k-1}^{-\alpha (a+1)} \Big]
  + (3 L_{k-1}^{\gamma-1})^d v_{k-1} \, .
\end{equation}
\end{proposition}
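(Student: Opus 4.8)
The plan is to use the characterization of slow boxes provided by Lemma~\ref{l:cure}, together with the decoupling bound \eqref{e:cov2}, to express the event $W_m$ at scale $k$ in terms of events at scale $k-1$. Fix $m = (k,(0,0)) \in M_k$. By Lemma~\ref{l:cure}, if $W_m$ occurs then either there is a blocked sub-box $C_{m'} \subseteq C_m$ with $m' \in M_{k-1}$, or there are more than $3L_{k-1}^a D$ layers $j \in J_m$ each containing a slow box $C_{m'}$ with $m' \in M_m^j$. So I would write
\begin{equation*}
  \mathbb{P}[W_m] \leq \sum_{m' : C_{m'} \subseteq C_m} \mathbb{P}[V_{m'}] + \mathbb{P}\Big[\text{more than $3L_{k-1}^a D$ layers $j \in J_m$ are ``bad''}\Big],
\end{equation*}
where a layer $j$ is bad if $\sup_{m' \in M_m^j} T_{m'}(\omega_{m'}) > r_{k-1}L_{k-1}$. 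The first sum is over at most $3 L_{k-1}^{\gamma-1} \cdot (3L_{k-1}^{\gamma-1})^{d-1} = (3L_{k-1}^{\gamma-1})^d$ indices $m'$ (counting the sub-boxes in $C_m$: base-direction count $3L_{k-1}^{\gamma-1}$ in each of $d-1$ directions and $|J_m| = L_{k-1}^{\gamma-1}$ in the vertical, but restricted to those with $C_{m'} \subseteq C_m$), and each term is bounded by $v_{k-1}$; this accounts for the last term in \eqref{e:skrecur}.

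For the second term, the key point is that being a ``bad layer'' means some $m'$ in that layer has $T_{m'}(\omega_{m'}) > r_{k-1}L_{k-1}$, i.e. $W_{m'}$ holds. Having more than $3L_{k-1}^a D$ bad layers forces the occurrence of $W_{m'_1} \cap \cdots \cap W_{m'_D}$ for $D$ indices $m'_1, \dots, m'_D$ lying in $D$ \emph{distinct} layers among the $|J_m| = L_{k-1}^{\gamma-1}$ possible ones (one can actually extract many more, but $D$ suffices). I would union-bound over the choice of $D$ such configurations: the number of ways to pick $D$ layers and one box per layer is at most $(3L_{k-1}^{\gamma-1})^{dD}$ (crudely, $3L_{k-1}^{\gamma-1}$ choices in each of the $d$ coordinate directions, $D$ times). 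This yields
\begin{equation*}
  \mathbb{P}\Big[\text{more than $3L_{k-1}^a D$ bad layers}\Big] \leq (3L_{k-1}^{\gamma-1})^{dD} \sup \mathbb{P}[W_{m'_1} \cap \cdots \cap W_{m'_D}],
\end{equation*}
the supremum over admissible tuples. Now comes the decoupling: because the boxes at scale $k-1$ lie in distinct layers (vertically separated by at least $L_{k-1}$), and each $W_{m'}$ depends only on $\omega$ restricted to $C_{m'}$, which has side length $3hL_{k-1} \geq 3L_{k-1}$ — so the cubes fit the geometric hypothesis of \eqref{e:cov2} with $L$ replaced by $L_{k-1}$ (using $L_{k-1} \geq L_0 \geq \useconstant{c:cov}$) — I can apply \eqref{e:cov2} with these $D$ boxes to get $\mathbb{P}[W_{m'_1} \cap \cdots \cap W_{m'_D}] \leq \prod_i \mathbb{P}[W_{m'_i}] + L_{k-1}^{-\alpha} \leq w_{k-1}^D + L_{k-1}^{-\alpha}$. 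Hmm, but \eqref{e:skrecur} has $L_{k-1}^{-\alpha(a+1)}$, not $L_{k-1}^{-\alpha}$; this stronger error term presumably comes from applying \eqref{e:cov2} at scale $H_{L_{k-1}}$ rather than $L_{k-1}$ — i.e. the cubes $C_{m'}$ have height $H_{L_{k-1}} = L_{k-1}^{a+1}$, and if one can arrange them to be at mutual distance at least $L_{k-1}^{a+1}$ (which requires picking the $D$ layers far enough apart — feasible since $|J_m| = L_{k-1}^{\gamma-1}$ is polynomially larger than $L_{k-1}^a$, as $\gamma - 1 > d + 2a - 1 \geq a$, so there is ample room to spread $D$ layers out by vertical gaps of order $L_{k-1}^{a+1}$), then \eqref{e:cov2} with $L = L_{k-1}^{a+1}$ gives error $L_{k-1}^{-\alpha(a+1)}$. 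I would use this spread-out selection.

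Assembling the three contributions gives exactly \eqref{e:skrecur}. The main obstacle I anticipate is the careful bookkeeping in the middle step: one must verify that ``more than $3L_{k-1}^a D$ bad layers'' genuinely lets one extract $D$ bad boxes in $D$ distinct layers that are \emph{mutually vertically separated by at least} $L_{k-1}^{a+1}$ — this is where the factor $3L_{k-1}^a D$ in Lemma~\ref{l:cure} and the layer count $L_{k-1}^{\gamma-1}$ interact, since among $> 3 L_{k-1}^a D$ marked layers out of $L_{k-1}^{\gamma-1}$ one needs a greedy/pigeonhole argument to find $D$ of them pairwise $\geq L_{k-1}^{a+1}$ apart; this works precisely because $\gamma - 1 \geq a+1$ (indeed $\gamma > d+2a \geq a+2$). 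The combinatorial counting $(3L_{k-1}^{\gamma-1})^{dD}$ for the number of box-tuples and $(3L_{k-1}^{\gamma-1})^d$ for the blocked-box count are then routine, as is checking the geometric hypotheses of \eqref{e:cov2} (side lengths and the $L \geq \useconstant{c:cov}$ requirement, guaranteed by $L_0 \geq 100 \vee \useconstant{c:cov}$).
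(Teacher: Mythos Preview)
Your proposal is correct and follows essentially the same route as the paper's proof. One small clarification: the extraction of $D$ bad layers at mutual vertical distance $\geq L_{k-1}^{a+1}$ does not hinge on $\gamma - 1 \geq a+1$ but comes directly from the threshold $3L_{k-1}^a D$ in Lemma~\ref{l:cure} --- sorting the $>3L_{k-1}^a D$ bad layer indices and taking every $(3L_{k-1}^a)$-th one already gives $D$ indices with pairwise index gap $\geq 3L_{k-1}^a$, hence pairwise vertical gap $\geq L_{k-1}^{a+1}$ between the corresponding $C$-boxes, which is exactly what the paper does before invoking \eqref{e:cov2} at scale $L_{k-1}^{a+1}$.
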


\begin{proof}
Using Lemma~\ref{l:cure}, we obtain that for any $k \geq 1$ and $m \in M_k$,
\begin{equation}
  \label{e:twocases}
  \begin{split}
  \mathbb{P}[W_m] & \leq \mathbb{P} \big[ \text{more than $3 L_{k-1}^a D$ indices $j \in J_m$ contain a slow $C_{m'} \subseteq C_m$}\big]\\
  & \quad + \mathbb{P} \big[ V_{m'} \text{ occurs for some $m' \in M_{k-1}$ with $C_{m'} \subseteq C_m$ and index $j \in J_m$} \big] \, ,
  \end{split}
\end{equation}
Let us recall that the number of $m' \in M_{k-1}$ such that $C_{m'} \subseteq C_m$ and with index $j \in J_m$ (see \eqref{e:Jm}) is bounded by $(3  L_{k-1}^{\gamma-1})^d$. Therefore, the probability of the event on the second line of \eqref{e:twocases} is bounded by $(3 L_{k-1}^{\gamma-1})^d v_{k-1}$, which corresponds to the second term in \eqref{e:skrecur}.

Let us now turn to the probability appearing in the first line of \eqref{e:twocases}. We first order the indices $j_1 < j_2 < \dots < j_{3L^a_{k-1}D + 1}$ containing a slow $C_{m'} \subseteq C_m$. Then, by considering the sub-sequence $j'_i = j_{i 3L^a_{k-1}D + 1}$, for $i = 1, \dots, D$, we obtain
\begin{align}
 \nonumber
  \big\{ \text{mo}&\text{re} \text{ than $3 L_{k-1}^a  D$ indices $j \in J_m$ contain a slow box $C_{m'} \subseteq C_m$}\big\}\\
  &\subseteq \Big\{
  \begin{array}{c}
  \text{there are indices $j'_1, \dots, j'_D$ containing slow boxes $C_{m'_1}, \dots, C_{m'_D} \subseteq C_m$}\\
  \text{and such that $|j'_i - j'_{i'}| \geq 3 L_{k-1}^a$, for $i \neq i' \in \{1, \dots, D\}$}
  \end{array}\Big\}\\
 \nonumber
  &\subseteq \Big\{
  \begin{array}{c}
  \text{there are slow boxes $C_{m'_1}, \dots, C_{m'_D}$ ($m'_1, \dots, m'_D \in M_{k-1}$) contained}\\
  \text{in $C_m$ and such that $d(C_{m'_i}, C_{m'_{i'}}) \geq   L_{k-1}^{a+1}$, for $i \neq i' \in \{1, \dots, D\}$}
  \end{array}\Big\}.
\end{align}
Fixed boxes $C_{m'_1}, \dots, C_{m'_D}$ satisfying $d(C_{m'_i}, C_{m'_{i'}}) \geq L_{k-1}^{a+1}$, we can bound the probability that they are all slow by $w_{k-1}^D + L_{k-1}^{-\alpha (a+1)}$ using assumption \eqref{e:cov2} and the fact that $L_{k-1} \geq L_0 \geq \useconstant{c:cov}$. We can finally obtain \eqref{e:skrecur} by plugging this estimate with the number of possible choices for the $D$ boxes $C_{m'_1}, \dots, C_{m'_D}$ contained in $C_m$. This finishes the proof of Proposition~\ref{p:recur}.
\end{proof}

Proposition~\ref{p:recur} above provides us a way to bound  $w_k$, thanks to the bound \eqref{e:vkdecay} on $v_k$ which is a consequence of assumption  \eqref{e:W}.

For the following theorem, we suppose that  $\gamma  \geq d + 2a$, $D > 2 \gamma$ and that
\begin{align}
 \label{e:assumpt1}
  \alpha (a+1) & > 2 D d (\gamma-1) \quad \text{ and}\\
 \label{e:assumpt2}
  \rho & >  d (\gamma-1)(1+ 2 \gamma)
\end{align}

\begin{remark}
\label{r:constants}
It should be observed that we have plenty of possible choices for $D, \gamma, \alpha$ and $\rho$ satisfying \eqref{e:assumpt1}--\eqref{e:assumpt2} in order to apply Theorem~\ref{t:sdkdecay} below. These different choices will lead to complementary results. Roughly speaking, we should keep in mind that:
\begin{align*}
 & \text{the smaller the $\alpha$, the more dependent can be the environment $\omega$ (see \eqref{e:cov2}),}\\
 & \text{the larger $d (\gamma -1)$, the faster will be the decay of $w_k$ (see \eqref{e:skdecay}) and}\\
 & \text{one may only be able to prove \eqref{e:vkdecay} for certain small values of $\rho$.}
\end{align*}
In particular, if the disorder correlations and the probability in \eqref{e:W} are both exponentially decaying then \eqref{e:assumpt1}--\eqref{e:assumpt2} can always be verified, by properly choosing $\gamma$, $D$, $\alpha$ and $\rho$ in this precise order.
\end{remark}

\begin{theorem}
\label{t:sdkdecay}
Assume that \eqref{e:cov2} and \eqref{e:W} hold, with $D$, $\alpha$ and $\rho$ as in \eqref{e:assumpt1}--\eqref{e:assumpt2}.
Then, we can choose $L_0$ and $r_0$ large enough (depending on $\gamma$, $d$, $D$, $\alpha$, $h$, $\useconstant{c:cov}$ and $\useconstant{c:unpinning}$), so that, defining the scale sequence $(L_k)_{k \geq 0}$ as in \eqref{e:Lk}, we have
\begin{equation}
  \label{e:skdecay}
  w_k \leq L_k^{- 2 d (\gamma -1)}.
\end{equation}
\end{theorem}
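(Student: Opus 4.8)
The plan is to prove \eqref{e:skdecay} by induction on $k$, first fixing $L_0$ large and then $r_0$ large. Set $b := 2d(\gamma-1)$, so that \eqref{e:skdecay} reads $w_k \le L_k^{-b}$; since $L_k = L_{k-1}^{\gamma}$, the inductive target at step $k$ is equivalently $w_k \le L_{k-1}^{-\gamma b}$. For the base case $k=0$, recall $w_0 = \mathbb{P}[T_{(0,(0,0))} > r_0 L_0]$. By \eqref{e:Tmfinite}, a finite crossing time at scale $0$ is at most $(3h)^{d-1}L_0^{\,d+a}$, so choosing $r_0 \ge (3h)^{d-1}L_0^{\,d+a-1}$ forces the inclusion $W_{(0,(0,0))} \subseteq V_{(0,(0,0))}$, hence $w_0 \le v_0$. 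The reasoning behind \eqref{e:vkdecay} applies already at scale $0$ provided $L_0 \ge \useconstant{c:unpinning}$, giving $v_0 \le L_0^{-\rho}$; and \eqref{e:assumpt2} yields $\rho > d(\gamma-1)(1+2\gamma) \ge 2d(\gamma-1) = b$, so $w_0 \le L_0^{-b}$.

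For the inductive step, assume $w_{k-1} \le L_{k-1}^{-b}$ with $k \ge 1$. Since $L_0 \ge 100 \vee \useconstant{c:cov}$, Proposition~\ref{p:recur} is available, and feeding in $v_{k-1} \le L_{k-1}^{-\rho}$ (from \eqref{e:vkdecay}, resp. the base-case observation when $k=1$) together with the inductive hypothesis gives
\[
w_k \;\le\; 3^{dD}\,L_{k-1}^{\,dD(\gamma-1)}\bigl(L_{k-1}^{-bD} + L_{k-1}^{-\alpha(a+1)}\bigr) \;+\; 3^{d}\,L_{k-1}^{\,d(\gamma-1)-\rho}.
\]
It then remains to check that each of the three terms on the right is at most $\tfrac13 L_{k-1}^{-\gamma b}$. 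Using $bD = 2dD(\gamma-1)$, the first term is $3^{dD}L_{k-1}^{-dD(\gamma-1)}$, whose exponent is $< -\gamma b$ precisely because $D > 2\gamma$; the second term is $3^{dD}L_{k-1}^{\,dD(\gamma-1)-\alpha(a+1)}$, whose exponent is $< -dD(\gamma-1) < -\gamma b$ by \eqref{e:assumpt1}; and the third term's exponent $d(\gamma-1)-\rho$ is $< -2d\gamma(\gamma-1) = -\gamma b$ by \eqref{e:assumpt2}. In each case the polynomial prefactor ($3^{dD}$ or $3^d$) is absorbed once $L_0$ is large enough, because the exponent gap is strictly positive. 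Summing the three estimates yields $w_k \le L_{k-1}^{-\gamma b} = L_k^{-b}$, closing the induction; a single choice of $L_0$ works for every $k$ since $L_{k-1}\ge L_0$ and all the exponents above are negative and independent of $k$.

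I expect essentially no conceptual obstacle: the combinatorial and probabilistic substance has already been isolated in Lemma~\ref{l:cure} and Proposition~\ref{p:recur}, so what is left is bookkeeping of exponents. The one point deserving care is that the three polynomial prefactors in \eqref{e:skrecur} — coming respectively from the number of placements of $D$ slow sub-boxes, from the decoupling error in \eqref{e:cov2}, and from the number of candidate blocked sub-boxes — must each be overwhelmed by the super-exponential growth $L_{k+1} = L_k^{\gamma}$; this is exactly the role played by the three strict inequalities $D > 2\gamma$, \eqref{e:assumpt1} and \eqref{e:assumpt2}, one per term. Finally, the base case is the only place where $r_0$ enters and where the finite-size criterion \eqref{e:W} is used directly.
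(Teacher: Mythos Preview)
Your proposal is correct and follows essentially the same route as the paper: induction on $k$ via Proposition~\ref{p:recur} and \eqref{e:vkdecay}, with the base case secured by taking $r_0$ large. Your treatment is in fact a bit more explicit than the paper's---you spell out that $r_0 \ge (3h)^{d-1}L_0^{\,d+a-1}$ forces $W_{(0,(0,0))}\subseteq V_{(0,(0,0))}$ via \eqref{e:Tmfinite}, and you verify the three exponent inequalities separately rather than in one chain---but the substance is identical.
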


\begin{proof}
Let us first choose $L_0 \geq \useconstant{c:cov} \vee \useconstant{c:unpinning}$ (see \eqref{e:cov2} and \eqref{e:W}) large enough such that
\begin{equation}
 \label{e:L0bound}
 \inf \left\{
 L_0^{(\gamma - 1)d (D - 2\gamma )} ,  L_0^{ - d (\gamma-1) (1+2 \gamma) + \rho}
 \right\}
 \geq 4 \cdot 
 3^{Dd},
\end{equation}
which is possible due to $D > 2 \gamma$ and \eqref{e:assumpt2}. Note that $L_0$ depends on $\gamma$, $d$, $D$, $\alpha$, $h$, $\useconstant{c:cov}$ and $\useconstant{c:unpinning}$.
This inequality will be used later with $L_k$ in place of $L_0$ as the sequence $\{ L_k\}_k$ is increasing.


Then we pick $r_0(h, \useconstant{c:cov}, \useconstant{c:unpinning})$ large enough so that \eqref{e:skdecay} holds with $k = 0$ (this determines the remaining values of $(r_k)_{k\geq 1}$ through \eqref{e:rk}).
All we need to show is that, if for some $k \geq 1$,
\begin{equation}
  \label{e:induct}
  w_{k-1} \leq L_{k-1}^{- 2 d (\gamma -1) },
\end{equation}
then $w_{k} \leq L_{k}^{- 2 d (\gamma -1) }$.

This is done with help of Proposition~\ref{p:recur} together with \eqref{e:induct} and \eqref{e:vkdecay}, leading to
\begin{equation}
  \begin{array}{e}
   w_k & \leq & (3L_{k-1}^{\gamma-1})^{dD}\Big[w_{k-1}^D + L_{k-1}^{-\alpha (a+1)} \Big] + (3L_{k-1}^{\gamma-1})^dL_{k-1}^{-\rho}\\[1mm]
       & \overset{\eqref{e:assumpt1}}\leq & 2 (3)^{dD} L_{k-1}^{(\gamma-1)d D - 2 d D (\gamma -1) }
       + (3)^d L_{k-1}^{(\gamma-1)d  - \rho }\\[1mm]
       & \overset{\eqref{e:assumpt2}}\leq &
       2 (3)^{dD} L_{k-1}^{(\gamma-1)d (2 \gamma - D)}  \; \big( L_{k-1}^\gamma \big)^{- 2 d(\gamma-1)}
       + 3^d \; L_{k-1}^{d (\gamma-1) (1+2 \gamma) - \rho}
       \; \big( L_{k-1}^\gamma \big)^{-  2 d (\gamma-1)}\\
       & \overset{
       \eqref{e:L0bound}}\leq &
       L_k^{-  2 d (\gamma-1)} \, .
  \end{array}
\end{equation}
This concludes by induction the proof of Theorem~\ref{t:sdkdecay}.
\end{proof}

Theorem~\ref{c:speed} can now be deduced as  a simple consequence of Theorem~\ref{t:sdkdecay}.

\begin{proof}
We choose $L_0$ and $r_0$ as in Theorem~\ref{t:sdkdecay} and define the sequences $(L_k)_{k \geq 1}$ as in \eqref{e:Lk}.
For any given $k \geq 1$, let $m_k = (k, (0,0))$ and define
$$
M_k' = \{m' \in M_{k-1}; B_{m'} \subset C_{m_k}, \text{with indices in $J_{m_k}$} \}
$$
where $M_k$ was introduced in \eqref{e:Mk}.
Let us estimate,
\begin{equation*}
\sum_{k \geq 1} \sum_{m' \in M_k'} \mathbb{P}[W_{m'}] \leq \sum_{k \geq 1} (3h)^{d-1} L_{k-1}^{(\gamma-1)d} w_{k-1} \overset{\text{Theorem~\ref{t:sdkdecay}}}\leq \sum_{k \geq 1} (3h)^{d-1} L_{k-1}^{- (\gamma-1)d},
\end{equation*}
which is clearly finite (recall \eqref{e:assumpt1} and that $L_k$ grows faster than exponentially). Therefore, using Borel-Cantelli's lemma, we can conclude that $\mathbb{P}$-a.s.
\begin{equation}
\label{e:allTm}
 T_{m'} \leq \useconstant{c:boundrk} L_{k-1}, \text{ for every $m' \in M_k'$ and all but finitely many $k$'s,}
\end{equation}
where $\useconstant{c:boundrk}(h,\useconstant{c:cov}, \useconstant{c:unpinning})$ was defined in \eqref{e:boundrk}.

To finish the proof, we need to show that on the event \eqref{e:allTm}, we have \eqref{e:speed}. We first show that under \eqref{e:allTm}, we have
\begin{equation}
 \label{e:steps}
 S_{\useconstant{c:boundrk} n L_k}(0) \geq n L_k \text{ for every $n = 1, \dots,  L_{k}^{\gamma-1}$ and all but finitely many $k$'s}.
\end{equation}
To see why this is true, fix some $k \geq 1$ for which \eqref{e:allTm} holds. Then we can use \eqref{e:stairs} to conclude that after time $n \useconstant{c:boundrk} L_{k-1}$, $S(0)$ has surpassed $n L_{k-1}$, for every $n \leq  L_{k-1}^{\gamma-1}$.

To finish the proof of Theorem~\ref{c:speed}, let observe that the event in \eqref{e:steps} implies that $\liminf_t \frac{S_t(0)}{t} \geq 1/(2h\useconstant{c:boundrk})$.
To see why this is true, observe that for any $k$ such that \eqref{e:speed} holds, by the monotonicity of $S_t(0)$,
\begin{equation}
 S_t(0) \geq t/(2h\useconstant{c:boundrk}), \text{ for each integer $t \in [\useconstant{c:boundrk} L_{k-1}, \useconstant{c:boundrk} L_k]$.}
\end{equation}
This finishes the proof that $\liminf_t \frac{S_t(0)}{t} \geq 1/(2h\useconstant{c:boundrk})$ on the almost sure event \eqref{e:steps}, yielding  Theorem~\ref{c:speed}.
\end{proof}

\section{Application of the criterion}
\label{s:perturb}


In this section we consider several types of models and check that  criterion \eqref{e:W} is valid
when the mean forcing is strong enough. This implies by Corollary~\ref{c:speed} that the velocity of the interface is positive.
Furthermore for a specific type of evolution of Lipschitz surfaces, we check that criterion \eqref{e:W} is valid up to the pinning threshold.

\subsection{Strong interaction}

We will consider a model of Lipschitz interface evolution to illustrate the techniques of the previous sections.
Although several models could be constructed following the same lines as below, we will focus here on a specific example of $2$-Lipschitz interfaces with evolution rule that can be informally described as follows.
\begin{enumerate}
 \item[(i)] Whenever an update could lead to the surface having discrete gradient $\pm 3$ at some position, this update is suppressed.
 \item[(ii)] Respecting the above, whenever an update reduces the absolute value of the gradient from two to one at some position, it is performed.
 \item[(iii)] If the update cannot be determined by the above rules, it will occur if and only if the discrete Laplacian is greater than the depth of the potential.
\end{enumerate}
To make the above description more precise, we define the evolution according to \eqref{e:evolve}, with
\begin{equation}
 \label{e:FLips}
 \begin{split}
   F(a_1, & a_2, \dots, a_{2 (d-1) }, \omega)\\
   & = 1_{\{a_k > -2; \text{ for all
     } k \leq 2 (d-1)  \}} \; \Max \Big( 1_{\{a_k = 2; \text{ for some } k
     \leq 2 (d-1)  \}};
   1_{\{\Delta S(x) + \omega(x,S(x)) > 0\}} \Big).
 \end{split}
\end{equation}
We consider an environment where the disorder is made of independent Bernoulli variables taking values tuned with respect to the dimension $d$
$$
p = \mathbb{P} \big(  \omega(x,y)  = - 3 (d-1) \big) = 1 - \mathbb{P} \big( \omega(x,y)  = 1/2 \big) \, .
$$
If the absolute value of gradients at one site are all less or equal to 1, then the interface is blocked at this site when  $\omega  = - 3 (d-1)$ and will move when $\omega  = 1/2$ only if  the Laplacian at this site is non negative.

\begin{figure}[ht]
\centerline{
    \includegraphics[angle=0, height=0.15\textwidth]{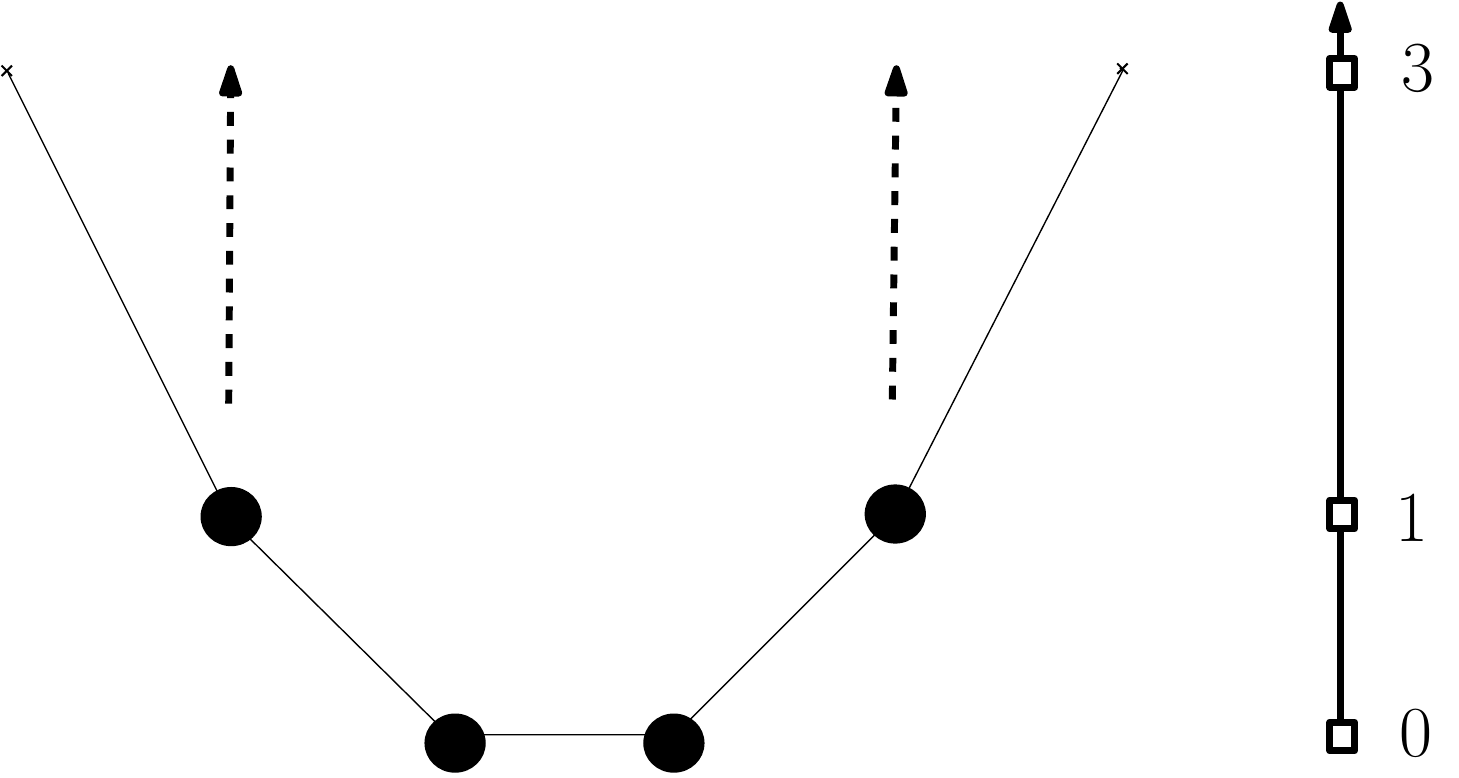}
}
    \caption{\small For $d=2$,  a portion of interface is depicted and the dots mark the occurrence of a trap $\omega = -3$.
    The heights are represented on the right.
    For this particular trap configuration, we note that the interface would be always blocked if condition (ii) was not imposed. Indeed, the outermost sites are blocked because of the $2$-Lipschitz constraint and
the 4 sites inside satisfy $\Delta S(x) + \omega(x,S(x)) \leq 0$. Condition (ii) implies that the interface cannot be trapped by a local configuration :  for the interface depicted above, the two sites indicated by the arrows will first move upward leading, in the next step, to unblocking the two sites at the center.}
    \label{fig: blocage}
\end{figure}

The set of $2$-Lipschitz interfaces is invariant under the previous evolution. Note that an evolution encoded only by a Laplacian and restricted to the class of $2$-Lipschitz interfaces would display a trivial behavior for all values of $p>0$ as the interfaces get almost surely blocked by single local trap (see figure \ref{fig: blocage}).
The rule (ii) of forcing a move when a gradient is equal to 2 prevents the local traps from blocking the interface forever. We refer to \cite{KoltonRosso} where other rules have been devised  to suppress the effect of local traps in Lipschitz dynamics.  Similar evolutions have been also studied in \cite{BS95, TL92}.

\medskip

We first state
\begin{lemma}
 \label{l:blocklips}
 For $d \geq 2$ and $p$ close enough to one, there are almost surely infinitely many blocked surfaces.
\end{lemma}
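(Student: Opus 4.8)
Looking at Lemma~\ref{l:blocklips}, I need to prove that for $d\geq 2$ and $p$ close to one, there are almost surely infinitely many blocked surfaces. Let me think about how to prove this.

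The plan is to produce blocked surfaces as $1$-Lipschitz membranes sitting entirely on trap sites, where a site $(x,y)$ is called a \emph{trap} when $\omega(x,y)=-3(d-1)$, and then to invoke the Lipschitz percolation estimate of \cite{DDGHS}. The first, elementary step is to observe that any surface $\Lambda:\Z^{d-1}\to\Z$ whose neighbouring values differ by at most one (i.e.\ $|\partial_e\Lambda(x)|\le 1$ for every $x$ and every $e\in E$) and for which $(x,\Lambda(x))$ is a trap for every $x$ is automatically a fixed point of the dynamics \eqref{e:evolve}. Indeed, at any site $x$ none of the discrete gradients of $\Lambda$ equals $2$ or $-2$, so in \eqref{e:FLips} the prefactor $1_{\{a_k>-2\text{ for all }k\}}$ equals $1$ and the indicator $1_{\{a_k=2\text{ for some }k\}}$ equals $0$; hence $\Lambda$ could move at $x$ only if $\Delta\Lambda(x)+\omega(x,\Lambda(x))>0$. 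But a $1$-Lipschitz surface has $\Delta\Lambda(x)\le 2(d-1)$, while $\omega(x,\Lambda(x))=-3(d-1)$, so $\Delta\Lambda(x)+\omega(x,\Lambda(x))\le-(d-1)<0$. Thus $S_t\equiv\Lambda$ solves \eqref{e:evolve}, i.e.\ $\Lambda$ is a blocked surface.

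It remains to exhibit such a membrane. Since the $\omega(x,y)$ are i.i.d.\ and a site is a trap with probability $p$ independently over $\Z^d$, this is precisely the content of \cite{DDGHS}: when $p$ is close enough to $1$ (a threshold depending only on $d$), there exists almost surely a $1$-Lipschitz function $\Lambda:\Z^{d-1}\to\Z_{\ge 0}$ such that $(x,\Lambda(x))$ is a trap for every $x$. By the previous paragraph this $\Lambda$ is a blocked surface; note in passing that its mere existence already forces pinning of the flat interface, since by attractiveness (\eqref{e:Fmonotone}, \eqref{e:onebyone}) any blocked surface lying above $S_0$ bounds $S_t$ from above for all $t$.

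To upgrade to \emph{infinitely many} blocked surfaces, fix $K\in\Z_{\ge 0}$ and apply the above to the environment restricted to the upper half-space $\Z^{d-1}\times\Z_{\ge K}$. By the vertical translation invariance \eqref{e:invariant}, this restricted environment has the same law as the original one over $\Z^{d-1}\times\Z_{\ge 0}$, so almost surely there is a $1$-Lipschitz trap-membrane $\Lambda_K$ with all values in $\Z_{\ge K}$, hence a blocked surface with $\min_x\Lambda_K(x)\ge K$. Intersecting this almost sure event over $K\in\Z_{\ge 0}$ (a countable family) shows that, almost surely, for every $K$ there is a blocked surface lying above level $K$; in particular the family of blocked surfaces is infinite, which is the claim of the lemma.

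The only substantial ingredient here is the Lipschitz percolation statement used in the second paragraph, and this is the step I would expect to be the real work in a self-contained treatment: it is established in \cite{DDGHS} by a multiscale contour argument, and in the present setting it can be recovered by a Peierls-type estimate showing that, for $p$ close to $1$, the columns failing to contain a sufficiently dense set of traps form subcritical clusters, over which a $1$-Lipschitz membrane of traps can be routed. Everything else — the characterization of blocked surfaces and the half-space translation argument — is routine.
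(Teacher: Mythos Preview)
Your proof is correct and follows essentially the same approach as the paper: a $1$-Lipschitz surface sitting entirely on trap sites is blocked, and the existence of such surfaces for $p$ close to $1$ is imported from \cite{DDGHS}. Your write-up is in fact more complete than the paper's, which compresses everything into two sentences; in particular you spell out the verification that the update \eqref{e:FLips} vanishes on a $1$-Lipschitz trap membrane, and you make explicit the vertical-translation argument yielding \emph{infinitely many} blocked surfaces, a step the paper leaves implicit.
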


\begin{proof}
If $S$ is a $1$-Lispschitz surface such that any site is pinned, i.e. $\omega(x,S(x))  = - 3 (d-1)$, then the evolution rules \eqref{e:FLips} imply that the interface is blocked.
For large enough $p$, the occurrence of percolating $1$-Lipschitz surfaces is a consequence of  Theorem 1 in \cite{DDGHS}. Thus the Lemma holds.
\end{proof}

We now turn to small values of $p$ proving the following
\begin{lemma}
 \label{l:unblocklips}
For $d \geq 2$ and $p > 0$ small enough, then for $h \geq 10$ the criterion \eqref{e:W} holds. Consequently, the speed of the surface is positive.
\end{lemma}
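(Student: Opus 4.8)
The plan is to verify the finite size criterion \eqref{e:W} directly for this $2$-Lipschitz model when $p$ is small, by exhibiting an explicit ``typical'' surface that crosses $C^{h,L}$ and showing that a blocking surface is forced to have a large defect set whose probability is exponentially small. First I would analyze the structure of a surface $S$ that blocks $C^{h,L}$. By the $2$-Lipschitz constraint (which is preserved by \eqref{e:FLips}), and since $S$ intersects $B^{h,L}$ and is non-negative on $[0,3hL)^{d-1}$, the surface must be roughly flat at height comparable to $L$ over the central region, while the obstruction condition says every column with $S(x) \le H_L$ is pinned. The crucial observation provided by rules (i)--(iii) is that a column $x$ with all gradients of absolute value $\le 1$ can only be obstructed if $\omega(x, S(x)) = -3(d-1)$ (a ``trap''), and a column where some gradient equals $2$ can never be obstructed. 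So on any maximal flat piece of $S$ of size at least, say, $3$ in each coordinate direction, the interior columns all see a trap at their respective heights. Hence blocking forces a trap somewhere; the work is in showing it forces \emph{many} traps, spread out enough that the decoupling / independence makes the event rare.

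The key combinatorial step is a counting/geometry argument: because $h \ge 10$, the base $[0,3hL)^{d-1}$ is much wider than the height scale $L$ of the relevant portion of $S$, and the $2$-Lipschitz condition limits how fast $S$ can oscillate. I would argue that any $S$ blocking $C^{h,L}$ must contain, within the base of $B^{h,L}$, at least $cL^{d-1}$ columns (for some dimensional $c>0$) that are ``flat-obstructed'', i.e. have all gradients $\le 1$ in absolute value and hence sit at a trap; this is where the Lipschitz bound and the non-negativity/intersection conditions are combined, using that a $2$-Lipschitz function on a box of side $\sim hL$ taking a value $\le H_L = L^{a+1}$ somewhere and value $\sim L$ in $B^{h,L}$ simply cannot have a positive-density set of ``gradient $=2$'' columns without violating the constraints or escaping the box. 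Then, since the height at which the trap is seen is itself determined by $S$ (not by $\omega$ alone), I would take a union bound over all possible ``profiles'' of $S$ restricted to $C^{h,L}$: the number of $2$-Lipschitz profiles in $C^{h,L}$ is at most $C^{L^{d-1}}$ (finitely many increments per column, $L^{d-1}$ columns, times the $\le H_L$ choices for the base height which only contributes $\exp(O(\log L \cdot L^{d-1}))$), and for each fixed profile the probability that all $\sim cL^{d-1}$ prescribed i.i.d. sites are traps is $p^{cL^{d-1}}$. Choosing $p$ small enough that $p^{c} $ beats the entropy constant $C$ gives $\mathbb{P}[\exists\, S \text{ blocking } C^{h,L}] \le e^{-c'L^{d-1}} \le L^{-\rho}$ for every $\rho$, which is \eqref{e:W}; the consequence on positive speed is then immediate from Theorem~\ref{c:speed} once \eqref{e:assumpt1}--\eqref{e:assumpt2} are arranged (here $\alpha = \infty$ effectively since the environment is i.i.d., and $\rho$ can be taken as large as needed).

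The main obstacle I anticipate is the geometric lemma that blocking forces a positive density of flat-obstructed columns. The subtle point is rule (ii): a column with a gradient equal to $2$ is never obstructed, so a blocking surface could in principle try to be ``all corners''; I need the $2$-Lipschitz constraint together with the box geometry to rule this out quantitatively. Concretely, if a $2$-Lipschitz surface had gradient of absolute value $2$ at a positive fraction of columns along some line, its values would have to rise and fall by order $L$ repeatedly, which over the width $3hL$ with $h \ge 10$ either forces the surface below $0$ (contradicting non-negativity) or does not actually produce obstruction (since those very corner columns are unobstructed, so they would move, contradicting that $S$ is a blocking — hence time-invariant — surface). Making ``positive fraction'' precise and extracting genuinely $\Omega(L^{d-1})$ many trap sites, rather than merely $\Omega(L)$ or $\Omega(1)$, is the delicate part; in $d=2$ it is essentially a one-dimensional bookkeeping argument along the single base coordinate, and for $d\ge 2$ one applies it line by line. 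Everything else — the entropy bound on profiles, the union bound, the choice of $p$ — is routine once this density statement is in hand. The role of $h \ge 10$ is precisely to give enough horizontal room for this argument, guaranteeing the central box $B^{h,L}$ sits deep inside a flat region of any candidate blocking surface.
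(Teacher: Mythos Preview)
Your overall architecture---show that a blocking surface must sit on many traps, then beat the entropy of candidate surfaces by taking $p$ small---is exactly the paper's strategy, and your conclusion via Theorem~\ref{c:speed} is fine. But the step you flag as ``the main obstacle'' is a real gap, and the heuristic you offer for it does not work.

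Two concrete issues. First, the claim ``a column where some gradient equals $2$ can never be obstructed'' is false as stated: by \eqref{e:FLips}, if some other incident gradient equals $-2$ then the indicator $1_{\{a_k > -2 \text{ for all } k\}}$ kills the update, so the site \emph{is} obstructed. Second, your oscillation argument (``gradient $\pm 2$ at a positive fraction of columns forces the surface below $0$'') does not go through: a $2$-Lipschitz function can alternate $+2,-2,+2,\dots$ indefinitely while staying in a bounded band, so nothing prevents a large set of gradient-$2$ sites on geometric grounds alone.

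The clean resolution, which you are missing, is that a blocking surface is in fact \emph{$1$-Lipschitz} on the central region $x_0+[-L,L]^{d-1}$. The argument is short: if $S(x+e)-S(x)=-2$ somewhere, follow a maximal chain $x=x_0,x_1,\dots,x_{\bar n}$ with $S(x_k)-S(x_{k-1})=-2$; since $S\ge 0$ the chain terminates, and at $x_{\bar n}$ no outgoing gradient is $-2$ while the incoming one gives a $+2$, so by \eqref{e:FLips} the site $x_{\bar n}$ is \emph{not} obstructed, contradicting blocking. Once $S$ is $1$-Lipschitz, every obstructed site either carries a trap or has $\Delta S(x)\le -1$; the discrete divergence theorem then forces a trap in every sub-box of a fixed dimensional size $c$, which is the density statement you need. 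The entropy bound is then taken over $1$-Lipschitz (not $2$-Lipschitz) profiles on a box of side $2L$, giving $O(L)\cdot 3^{(2L)^{d-1}}$ surfaces, and the rest of your argument goes through.
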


\begin{proof}
 Given $h \geq 10$ and $a=1$, consider a surface $S$ blocking $C^{h,L}$ as below \eqref{e:CB}. We will first prove that there is $x_0$ in  $[hL, 2hL]^{d-1}$ such that
\begin{align}
 \label{e:Sbelow}
  & \text{the blocked surface $S$ restricted to $\cD^L(x_0) = x_0 + [-L, L]^{d-1}$ is below $3L$}\\
 \label{e:1lips}
  & \text{and $S$ is $1$-Lipschitz in $\cD^L(x_0)$.}
\end{align}
The first claim follows from the fact that the blocking surface $S$ touches $B^{h,L}$ so that there is a site $x_0$ in  $[hL, 2hL]^{d-1}$ where $S(x_0) \le L$.
Since  $S$ is $2$-Lipschitz, the surface remains below $3 L$ in $\cD^L(x_0)$.
To prove the second claim, suppose that there exists $x \in \cD^L(x_0)$ and a direction $e \in E$ such that $S(x+e) - S(x) = -2$ (the case $S(x+e) - S(x) = 2$ is analogous). Let $\{x_k\}_{k \leq \bar n}$ be the longest sequence of sites such that $x_0 =x$,  $x_1 =x +e$, $|x_k - x_{k-1}| = 1$ and $S(x_k) - S(x_{k-1}) = -2$.
It is clear that $\bar n \leq hL$, since $S(x)$ is positive in $[0, 3hL]^{d-1}$. But then, according to \eqref{e:FLips}, the surface is not blocked in $x_{\bar n}$, contradicting \eqref{e:Sbelow} and proving \eqref{e:1lips}.

The last property of such a blocking surface we need is that there exists a $c > 0$ depending only on the dimension $d$, such that
\begin{equation}
 \label{e:bend}
  \arraypar{for every box $B$ of side length $c$ contained in $\cD^L(x_0)$, we have $\omega(x, S(x)) = - 3(d-1)$ for some $x \in B$.}
\end{equation}
To see why this is true, observe from \eqref{e:Sbelow} and \eqref{e:1lips}, that given any point $x \in [hL, 2hL]^{d-1}$ we either have $\omega(x, S(x)) = -3(d-1)$ or $\Delta S(x) \leq -1$, see \eqref{e:FLips}. We will need the following discrete version of the Divergence Theorem
\begin{equation}
 \label{e:discreteGauss}
 \sum_{x \in B} \Delta S(x) = \sum_{x \in B, y \in \Z^{d-1} \setminus B; \atop |x-y|=1} S(y) - S(x),
 \quad  \text{ for every box $B \subset \Z^d$}.
\end{equation}
With this, \eqref{e:bend} follows from the fact that the ratio between the size of the boundary and the volume of a box goes to zero, so that for large enough boxes it is not possible to have a negative Laplacian on every point of the box and keep its Lipschitz character. This proves \eqref{e:bend}.

From \eqref{e:bend}, it is easy to see that given $x_0$ in $[hL, 2hL]^{d-1}$ and a  fixed surface $S: \cD^L(x_0) \to [0, 3L]$
\begin{equation}
 \label{e:expdecayS}
 \arraypar{the probability that $S$ is blocked is smaller or equal to $\exp \{-\psi(p) L^{d-1}\}$,}
\end{equation}
where $\psi(p)$ converges to infinity as $p$ goes to zero. Indeed, if one defines a paving of $[hL, 2hL]^{d-1}$ with boxes of side length $c$, each of these boxes must have at least one trap according to \eqref{e:bend}.

We complete the proof of the Lemma with a counting argument:
\begin{equation}
  \begin{split}
   \mathbb{P}[\text{there exists $S$ blocking $C^{h,L}$}] & \leq \sum_{x_0 \in [hL, 2hL]^{d-1}} \, \sum_{S: \cD^L(x_0) \to \{0,\dots, 3L\} \atop \text{$1$-Lipschitz}} \mathbb{P}[\text{$S$ is blocked}]\\
   & \leq \sum_{x_0 \in [hL, 2hL]^{d-1}}\,  \sum_{S:\cD^L(x_0) \to \{0,\dots, 3L\} \atop \text{$1$-Lipschitz}} \exp \{ -\psi(p) L^{d-1} \}.
  \end{split}
 \end{equation}
But since the number of such surfaces is bounded by $hL^2 \,  3^{d L^{d-1}}$, the above probability decays exponentially in $L$ as long as $p$ is sufficiently small, proving Lemma~\ref{l:unblocklips}. To see why the above implies the positive speed of the surface dynamics, one can simply use Remark~\ref{r:constants} to find constants satisfying \eqref{e:assumpt1}--\eqref{e:assumpt2} and then apply Theorems~\ref{t:sdkdecay} and \ref{c:speed}.
\end{proof}

In dimension $d =2$, the criterion  \eqref{e:W} is sharp for these evolution rules
\begin{lemma}
\label{l:unblockd=2}
For $d = 2$, there exists a critical parameter $p_c \in ]0,1[$ such that
\begin{align}
  \label{e:sub_crit}
  & \text{if $p< p_c$, \eqref{e:W} holds and the surface moves $\mathbb{P}$-a.s. with positive speed,}\\
  \label{e:super_crit}
  & \text{if $p>p_c$, the interface gets blocked almost surely, i.e. $\lim_t S_t(x) < \infty$, for each $x$.}
\end{align}
\end{lemma}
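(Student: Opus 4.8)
The plan is to show that a single threshold governs both the validity of \eqref{e:W} and the blocking of the interface. First, for the flat surface $S_0\equiv 0$, attractiveness (a consequence of \eqref{e:onebyone} and \eqref{e:Fmonotone}, as noted after \eqref{e:Fpinned}) makes $S_t$ non-decreasing in $t$, and it converges pointwise — i.e.\ $\lim_t S_t(x)<\infty$ for every $x$ — \emph{if and only if} there exists a \emph{barrier}: a surface $S\ge 0$ that is obstructed at every site. Indeed, in one direction the pointwise limit $S_\infty$ is such a barrier; in the other, a barrier $S\ge S_0$ is a stationary configuration that traps $S_t$ from above. Let $A$ denote the event that a barrier $S\ge 0$ exists; it is invariant under horizontal translations, and since $\omega$ is i.i.d.\ (hence ergodic under those translations) $\mathbb{P}_p(A)\in\{0,1\}$. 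Coupling all parameters through i.i.d.\ uniforms so that the set of traps $\{\omega=-3\}$ increases with $p$, and using that $F$ in \eqref{e:FLips} is non-decreasing in $\omega$ (so more traps only lower $F$, hence preserve obstruction and therefore preserve $A$ and increase $q_L(p):=\mathbb{P}_p[\exists\,S\text{ blocking }C^{h,L}]$), both $p\mapsto\mathbb{P}_p(A)$ and $p\mapsto q_L(p)$ are non-decreasing, and validity of \eqref{e:W} is a decreasing property of $p$. Define $p_c$ to be the threshold of the $0$--$1$ event $A$. By Lemma~\ref{l:unblocklips} (positive speed for small $p$, so $S_t(0)\to\infty$) we get $p_c>0$, and by Lemma~\ref{l:blocklips} (an a.s.\ barrier through trap sites for $p$ near $1$, which can be taken above level $0$, cf.\ \cite{DDGHS}) we get $p_c<1$. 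For $p>p_c$ we then have $\mathbb{P}_p(A)=1$, i.e.\ the flat interface is blocked a.s., which is \eqref{e:super_crit}.

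\textbf{Reduction of \eqref{e:sub_crit} to exponential decay.} It remains to show that $p<p_c$ implies \eqref{e:W}; positive speed then follows from Theorems~\ref{t:sdkdecay} and \ref{c:speed} after choosing admissible exponents as in Remark~\ref{r:constants} (note \eqref{e:cov2} is trivial for i.i.d.\ environments, so only \eqref{e:assumpt2} needs an honest input). The geometric input, extracted exactly as in the proof of Lemma~\ref{l:unblocklips} (with $a=1$, $h\ge10$): near the point $x_0$ where it meets $B^{h,L}$, a blocking surface is, on an interval of width $\sim L$, a $1$-Lipschitz function $T$ with values in $[0,3L]$ that is obstructed at every site, so that at each $x$ either $\omega(x,T(x))=-3$ or $\Delta T(x)\le-1$; a telescoping / discrete-divergence argument then shows that on \emph{any} subinterval at least a fixed fraction of the sites visited by $T$ are traps. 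Thus \eqref{e:W} follows once we prove exponential decay of $g_L:=\mathbb{P}[\exists$ such an obstruction path over an interval of length $L]$ (using $q_L\le \mathrm{poly}(L)\,g_{cL}$).

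\textbf{The two-step sharpness scheme.} I would prove the decay of $g_L$ by the standard sharpness strategy. Step (i), renormalization: since $\omega$ is i.i.d., the restrictions of $T$ to disjoint subintervals probe disjoint, hence independent, portions of the environment, and vertical translation invariance controls the height at which each subinterval sits; this yields a recursion of the shape $g_{NL}\le \mathrm{poly}(NL)\,(C L\,g_L)^{N}$, so that $g_{L_0}$ below an explicit ($L_0$-dependent) threshold propagates to $g_L\le e^{-cL}$ for all large $L$. Step (ii), the subcritical input: one must show that for $p<p_c$ the quantity $g_L$ does fall below that threshold.

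\textbf{The main obstacle.} The crux is step (ii). Its \emph{soft} form — $g_L\to0$ for $p<p_c$ — would be obtained by extracting from a sequence of obstruction paths a subsequential limit that is an infinite-volume barrier $S\ge0$, contradicting $\mathbb{P}_p(A)=0$; already this is delicate because of the freedom in the location $x_0$ where a blocking surface meets $B^{h,L}$, which must be controlled (e.g.\ by a mass-transport/ergodic argument) before passing to the limit. More seriously, bare convergence $g_L\to0$ is not quantitative enough to seed the recursion of step (i): what is needed is decay at least like a fixed power of $L$, and deriving this from the mere absence of infinite barriers is exactly a ``sharpness of the phase transition'' statement. I would obtain it either by mapping the obstruction paths to a genuine monotone percolation model on $\mathbb{Z}^2$ — using planarity and the ``a fixed fraction of the visited sites are traps'' estimate, so that $A$ corresponds to percolation of a suitable cluster — and invoking a standard sharpness theorem (Menshikov, Aizenman--Barsky, or Duminil-Copin--Tassion) to get exponential decay throughout the subcritical phase; or, absent a clean such mapping, by running the differential-inequality / OSSS argument directly on the monotone family $(g_L)_L$. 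Once exponential decay of $g_L$ is established, \eqref{e:W} holds for all large $L$, and \eqref{e:sub_crit} follows via Theorems~\ref{t:sdkdecay} and \ref{c:speed}.
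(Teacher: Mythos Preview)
Your overall architecture is sound but it diverges from the paper at the key step, and the part you flag as ``the main obstacle'' is precisely where the paper invests its effort --- but in the \emph{other} direction. You define $p_c$ abstractly as the threshold of the barrier event $A$; this makes \eqref{e:super_crit} trivial but leaves you with a genuine sharpness problem for \eqref{e:sub_crit}, which you acknowledge and do not resolve (the OSSS route in particular would need a careful setup, since $g_L$ is not obviously an event of a standard product percolation). The paper instead carries out concretely the first route you sketch: it observes that, in $d=2$, a $1$-Lipschitz obstructed stretch is \emph{exactly} an open path in an explicit directed site percolation on the graph $\mathbb{L}=(\mathbb{Z}^2,\mathcal{E})$ with the seven-edge neighbourhood $E=\{(1,1),(2,1),(3,0),(2,0),(1,0),(2,-1),(1,-1)\}$, a site being open iff it is a trap. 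The critical parameter $p_c$ is then \emph{defined} as the threshold of this directed percolation.

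With this identification the two halves trade difficulty. The subcritical side becomes a one-liner: for $p<p_c$, Aizenman--Barsky--Menshikov gives exponential decay of connection probabilities in $\mathbb{L}$, and since any blocking surface contains a $1$-Lipschitz stretch of length $\sim L$ (your own reduction), \eqref{e:W} follows immediately --- no renormalization recursion on $g_L$, no seed estimate, no soft limit argument. The work shifts to $p>p_c$: one must show that supercritical directed percolation on $\mathbb{L}$ actually produces a \emph{doubly infinite} path above level $0$ (not just an infinite cluster), and the paper does this by a reflection-and-concatenation trick together with a one-step block renormalization \`a la Grimmett--Marstrand to dominate a standard supercritical oriented percolation in the quarter plane. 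So the paper's approach buys a clean, quantitative subcritical phase at the price of a nontrivial supercritical construction; yours buys a free supercritical phase at the price of a sharpness theorem you have only outlined. The missing ingredient in your proposal --- the ``clean mapping to a genuine monotone percolation model'' --- is exactly the seven-edge directed percolation on $\mathbb{L}$, and once you write it down both of your proposed routes collapse into the paper's.
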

\begin{proof}
First, we will explain a correspondence between blocked interfaces (for the evolution rules \eqref{e:FLips}) and a directed percolation model.
 A site $(x,y)$ is called a {\it  blocking site} if $\omega (x,y) = -3$.
As explained in \eqref{e:1lips}, a blocked interface is $1$-Lipschitz in an interval of length at least $2L$.

Let $S$ be a $1$-Lipschitz blocked interface and $(x,S(x))$ be a blocked site.
Then, the next blocking site on the interface  to the right of $x$ can take at most 7 locations which are depicted figure \ref{fig: directed percolation}.
If none of these 7 sites are blocked then the interface cannot be blocked.
Thus a $1$-Lipschitz blocked interface is in correspondence with an oriented percolation path in the graph $\mathbb{L}$, whose vertices are $\mathbb{Z}^2$ and the oriented edges are given by
\begin{equation}
  \label{e:edges_E}
  \mathcal{E} = \Big\{ \big((x,y),(x + i, y + j)\big); x, y \in \mathbb{Z}^d \text{ and } (i, j) \in E \Big\},
\end{equation}
where $E = \{(1,1), (2,1), (3,0), (2,0), (1,0), (2,-1), (1,-1)\}$. See Figure~\ref{fig: directed percolation} for an illustration of all edges departing from a given vertex $(x,y)$.

\begin{figure}[ht]
\centerline{
    \includegraphics[angle=0, height=0.15\textwidth]{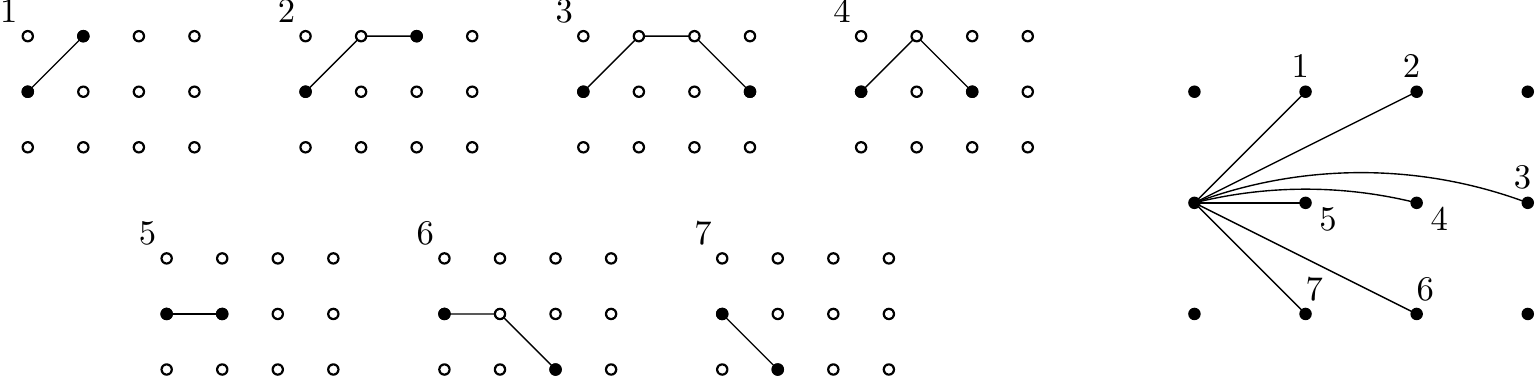}
}
    \caption{\small The left picture illustrates all possible situations of blocked sites which are represented by black dots. On the right,  their corresponding edges are depicted and they match the set $\mathcal{E}$.}
    \label{fig: directed percolation}
\end{figure}

Let $p_c$ be the critical threshold for  this oriented percolation on $\mathbb{L}$.
By Aizenmann-Barsky-Menshikov's Theorem (see Theorem~9 p.106 of \cite{MR2283880}), the probability of finding a percolating path to distance $L$ from the origin decays as $\exp ( -c L)$ when $p<p_c$.
Any blocked interface touching $B^{h,L}$ will be $1$-Lipschitz
in a stretch of length at least $2L$  thanks to \eqref{e:1lips}, thus the criterion  \eqref{e:W} holds  for $p< p_c$ with an exponential decay instead of a polynomial one.
Using Theorem \ref{c:speed} we now prove \eqref{e:sub_crit}.

\medskip

Suppose now that $p>p_c$ and let us show that
\begin{equation}
  \label{e:blockabove}
  \mathbb{P}[\text{there exists a blocked interface $S \geq 0$}] = 1.
\end{equation}
It is clear that the above probability is either $0$ or $1$ by ergodicity arguments.
We now show that it is positive, but first let us provide some further simplifications.

It is enough to show that
\begin{equation}
  \label{e:one_ray}
  \mathbb{P}\big[\text{there is an infinite occupied path $\overset{\rightarrow}\gamma$ in $(\mathbb{Z}_+\times\mathbb{Z}_+, \mathcal{E})$ starting from $0$} \big] > 0.
\end{equation}
Indeed, let us suppose that the above is true and conclude \eqref{e:blockabove}.
Note first that our percolation model on $\mathbb{L}$ is symmetric with respect to reflections around the $y$-axis (once we invert the orientation of all edges). This symmetry, together with \eqref{e:one_ray}, says that with positive probability there exists an infinite occupied path $\overset{\leftarrow}\gamma$ on $\mathbb{Z}_- \times \mathbb{Z}_+$ starting from the origin and crossing the edges of $\mathbb{L}$ in the reversed direction.
By concatenating two such paths and using the i.i.d. nature of our percolation model, we obtain that with positive probability there exists a doubly infinite occupied path $\overset{\leftrightarrow} \gamma: \mathbb{Z} \mapsto \mathbb{Z} \times \mathbb{Z}_+$, which proves \eqref{e:blockabove}.

Before turning to the proof of \eqref{e:one_ray}, let us first recall that similar statement holds true in a more standard context. Consider the classical oriented percolation on the graph $(\mathbb{Z}_+ \times \mathbb{Z}, \mathcal{E}')$, where $\mathcal{E}'$ is defined as in \eqref{e:edges_E} with $E$ replaced by $E' = \{(1,1), (1,-1)\}$. The results in \cite{MR1048929} imply that in the super-critical percolation regime, the claim $\eqref{e:one_ray}$ holds.

From the above reasoning, to prove \eqref{e:one_ray} it is enough to construct a process of renormalized boxes on $\mathbb{Z}_+ \times \mathbb{Z}$ which is dominated by our super-critical percolation, but on the other hand dominates a standard oriented percolation with arbitrarily high parameter $\eta$.
To achieve this, we follow \cite{MR1071804} and use a one step renormalization.
More precisely, we only have to show that for any $\eta > 0$ there exist $L(\eta), S(\eta), k(\eta) \geq 1$ and boxes $v_{i,j} = [0, 2S] \times [-2L, 2L] + (2ikS, jkL)$, for $i,j \in \mathbb{Z}_+ \times \mathbb{Z}$, such that
\begin{equation}
  \begin{array}{c}
    \text{the collection of $(i,j)$'s such that the box $v_{i,j}$ can be reached}\\
    \text{by an occupied path in $\mathbb{L}$ starting from $v_{0,0}$ dominates a standard}\\
    \text{oriented percolation from the origin with parameter $\eta$.}
  \end{array}
\end{equation}

The above claim follows from the arguments in \cite{MR1071804} (see Lemma~(21)) with minor modifications
(some of which are explained in \cite{hiemer}).
This completes the proof of \eqref{e:one_ray} and yields Lemma~\ref{l:unblockd=2}.
\end{proof}

\begin{remark}
\label{r:dependent}
\

\begin{enumerate}
\item Consider for instance the dynamics induced by \eqref{e:FLips} for $d = 2$. The proof of Lemma~\ref{l:blocklips} could easily be extended for non independent environments. Indeed, the only part of the proof where we used the i.i.d. structure of the energy landscape was to prove the decay of $\mathbb{P}[\text{there exists $S$ blocking $C^{h,L}$}]$. This decay could be established for dependent models such as finite dependent environments (see for instance Theorem~(7.61) of \cite{Gri99}, p.178). Environments with longer range of dependence (say, satisfying \eqref{e:cov} for a large enough $\alpha$) can probably be dealt with using renormalization techniques such as the ones appearing in Section~4 of \cite{Szn09} (see also Section~8 of \cite{PT12}).
\item We also believe that the techniques in \cite{MR1048929} could be used to show \eqref{e:one_ray}.
\end{enumerate}
\end{remark}

\subsection{Soft interaction}

In this section we are going to study non-Lipschitz surfaces in the presence of unbounded traps. In what follows, we restrict ourselves to $d = 2$.

We consider an evolution of the form
\begin{equation}
  \label{e:evolve2}
  S_{t+1}(x) = S_t(x) + F \Big(S_t(x-1) \negthinspace - \negthinspace S_t(x) \, , \; S_t(x+1) \negthinspace - \negthinspace S_t(x), \, \; \omega(x,S_t(x)) \Big) \, ,
\end{equation}
with $F(a,b,\omega) = 1_{b + a + \omega > 0}$. The intuitive description of the above mechanism is that the surface will move whenever its discrete Laplacian in a point overcomes the depth of the potential in that point.

The main result of this section is the following lemma which establishes the validity of the finite size criterion \eqref{e:W} for dynamics with evolution rule \eqref{e:evolve2} when the driving force is large enough (see assumption \eqref{e: perturbative condition 2} on the disorder).
In this framework, the positive velocity was already derived in a perturbative regime in \cite{DS12} and we follow their approach to check the finite size criterion under some conditions on the disorder distribution.

\begin{lemma}
\label{l:mainassump}
Suppose that the variables $\omega(x,y)$ are  i.i.d. in $C^{h,L}$ and satisfy
\begin{align}
&\exists \lambda_0 >0, \qquad
\mathbb{E}( \exp( - \lambda_0 \, \omega(x,y) ) < 1 - \exp( - \lambda_0) \, .
\label{e: perturbative condition 2}
\end{align}
Furthermore all the $\omega$ outside $C^{h,L}$ are equal to $- \infty$. Choose $a = 2$ such that the box $C^{h,L}$ has height $H_L = L^3$. Then there exist $c_1, \kappa > 0$ and $h > 1$, such that, for every $L \geq c_1$
\begin{equation}
\label{e:Assumption}
\mathbb{P}\big[ \text{there exists a surface $S$ blocking $C^{h,L}$} \big]
\leq \exp( - \kappa \sqrt{L} ) \, .
\end{equation}
\end{lemma}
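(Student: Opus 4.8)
The plan is to follow the approach of \cite{DS12} and reduce the event that some surface blocks $C^{h,L}$ to a statement about the non-existence of a low-energy barrier across the box, which in turn is controlled by a first/last passage percolation-type estimate via the exponential moment condition \eqref{e: perturbative condition 2}. The starting observation is that if $S$ blocks $C^{h,L}$, then at every site $x$ in the base with $S(x) \leq H_L$ the surface is obstructed, i.e. the update function returns zero. For the evolution \eqref{e:evolve2} with $F(a,b,\omega) = 1_{b+a+\omega > 0}$, being obstructed at $x$ means precisely $\Delta S(x) + \omega(x, S(x)) \leq 0$, that is $\omega(x,S(x)) \leq -\Delta S(x)$. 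Summing this inequality over a suitable stretch of the base and using the discrete divergence identity \eqref{e:discreteGauss} (or just a telescoping of the gradients in $d=2$), the bulk Laplacian terms cancel up to boundary gradient contributions, so that along a horizontal segment $I$ of length roughly $\ell$ we get $\sum_{x \in I} \omega(x, S(x)) \leq (\text{boundary gradients})$. The key point is to choose $I$ so that the boundary gradient term is not too large: one cannot control a single arbitrary gradient, but one can average — I would pick the stretch $I$ so that the gradients at its two endpoints are small, which is possible because the surface must come back down (it is non-negative and it touches $B^{h,L}$, so it cannot keep large slope over the whole width $3hL$). This yields a set of sites $\{(x,S(x))\}_{x \in I}$, forming a Lipschitz-type path of horizontal length $\ell$, along which the sum of the i.i.d. energies is atypically small, of order $-(\text{const})\cdot(\text{slack})$ with high probability violating the tilted large-deviation bound.

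Concretely, the second step is a union bound over the "shape'' of the blocking surface. Fix the horizontal location $x_0 \in [hL,2hL]$ where $S(x_0) \le L$ (this exists since the graph meets $B^{h,L}$). Over a window of width $\sqrt{L}$ (this is where the $\sqrt{L}$ in \eqref{e:Assumption} comes from) around $x_0$, the surface stays below $L + \sqrt{L} \le 2L \ll H_L$, so every site in this window is obstructed, giving $\omega(x,S(x)) \le -\Delta S(x)$ for all $x$ in the window. Telescoping, $\sum_{x=x_0}^{x_0+\sqrt L} \omega(x,S(x)) \le S'(x_0) - S'(x_0+\sqrt L) \le$ something bounded by the maximal admissible slope; but since the surface is confined to height $[0,2L]$ over a window of width $\sqrt L$, the average slope is $O(\sqrt L)$, hence $\sum_{x} \bigl(\omega(x,S(x)) + \lambda_0^{-1}\bigr)$ is $\le O(\sqrt L)$ minus a term linear in the window length — so picking the window length a large enough multiple of $\sqrt L$, or iterating over consecutive windows, forces $\sum_x \omega(x,S(x)) + (\text{window length}) \le 0$, an event whose probability, by the i.i.d.\ structure, Markov's inequality applied to $\exp(-\lambda_0 \sum_x \omega)$, and \eqref{e: perturbative condition 2}, is at most $\bigl(\mathbb{E} e^{-\lambda_0 \omega} / e^{-\lambda_0}\bigr)^{(\text{window length})} = e^{-c \sqrt L}$ for a fixed $c>0$. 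The number of possible shapes $S$ restricted to such a window, together with the choices of $x_0$ and of the height $S(x_0) \in \{0,\dots,L\}$, is at most $\mathrm{poly}(L) \cdot C^{\sqrt L}$ for some constant $C$ (each step of the graph in $d=2$ has boundedly many continuations once we have reduced to a controlled-slope regime — or, if slopes are a priori unbounded, one truncates: very steep surfaces are ruled out directly because a single step with $\Delta S(x)$ very negative forces $\omega(x,S(x))$ to be very negative, again a rare event). Choosing $h$ large is what guarantees enough room ($[hL,2hL]$ wide and the outer margin up to $3hL$) to carry out the endpoint-gradient control.

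Putting these together: the probability that there exists a blocking $S$ is at most (number of shapes)$\times$(probability a fixed shape is blocked) $\le \mathrm{poly}(L)\, C^{\sqrt L}\, e^{-c\sqrt L}$, which is $\le e^{-\kappa \sqrt L}$ for a suitable $\kappa > 0$ once $L \ge c_1$, provided $c > \log C$; since $c$ is the Legendre-type gap in \eqref{e: perturbative condition 2} and $\log C$ is a purely combinatorial (entropy) constant, this comparison is exactly the "driving force large enough'' hypothesis, and one makes it hold by taking the window length to be a large enough multiple of $\sqrt L$ so that the exponential decay beats the entropy. The main obstacle I anticipate is the bookkeeping around \emph{unbounded gradients}: unlike the Lipschitz model of the previous subsection, here $S$ has no a priori regularity, so one must simultaneously control the entropy of possible shapes and the boundary gradient terms in the telescoping identity. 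The clean way around this is to observe that a large gradient at a site is itself costly (it forces the obstructing inequality $\omega \le -\Delta S$ to demand an extremely deep trap, which by \eqref{e: perturbative condition 2} has exponentially small probability), so one can restrict attention to surfaces whose gradients over the relevant window are bounded by, say, $\sqrt L$, paying only an $e^{-c\sqrt L}$ factor; within that class the entropy is $C^{\sqrt L}$ and the argument above closes. I would also need the mild check that the window of width $\sqrt L$ around $x_0$ indeed stays below $H_L = L^3$, which is immediate, and that choosing $a=2$ (so $H_L = L^3$) leaves ample vertical room — this is used only to ensure "$S(x) \le H_L$'' holds on the whole window so that obstruction gives the pointwise energy inequality everywhere we need it.
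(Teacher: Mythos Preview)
Your proposal has a genuine gap in the entropy count, and this is precisely the ``bookkeeping around unbounded gradients'' that you flag but do not resolve. You claim that after restricting to surfaces whose gradients over the window are bounded by $\sqrt L$, ``within that class the entropy is $C^{\sqrt L}$''. This is false: over a window of length $\ell\sim\sqrt L$ with each increment taking $O(\sqrt L)$ values, the number of shapes is of order $(\sqrt L)^{\sqrt L}=\exp\bigl(\tfrac12\sqrt L\log L\bigr)$, which beats any factor $e^{-c\sqrt L}$. More generally, any argument of the form (number of shapes) $\times$ (probability of a fixed shape) cannot close here unless gradients are bounded by an absolute constant, and there is no cheap truncation that achieves this: a single large gradient does \emph{not} force a deep trap (it is the Laplacian that enters the obstruction inequality, not the gradient), so the reduction ``very steep $\Rightarrow$ rare'' is not available pointwise. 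A second, smaller gap: your assertion that ``over a window of width $\sqrt L$ around $x_0$ the surface stays below $2L$'' is unjustified without a Lipschitz constraint; some a~priori height control is needed first.

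The paper's proof circumvents the entropy problem by \emph{not} separating shapes from probability. Writing the obstruction condition as the gradient recursion $V(x)\le V(x-1)-\omega(x,S(x))$, it computes the partition-type quantity
\[
\mathbb A_n^{x_0}=\sum_{S_{x_0,n}}\mathbb E\Bigl[e^{\lambda V(n-1)-\lambda\omega(n,S(n))}\,1_{\{S_{x_0,n}\in\mathcal O_{x_0,n-1}\}}\Bigr]
\]
recursively in $n$. At each step the sum over the (unbounded) next height $s$ is a geometric series weighted by $e^{\lambda(s-S(n-1))}$, contributing exactly $1/(1-e^{-\lambda})$; integrating out the fresh environment variable contributes $\mathbb E[e^{-\lambda\omega}]$. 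Condition \eqref{e: perturbative condition 2} is precisely the statement that the product of these two factors is $<1$, so $\mathbb A_n^{x_0}$ decays exponentially in $n-x_0$ with the entropy already absorbed. The $\sqrt L$ in \eqref{e:Assumption} does not come from a window length but from a preliminary truncation: on the event $\mathcal D=\{\omega\ge-\sqrt L\text{ in }C^{h,L}\}$ (cost $e^{-\lambda_0\sqrt L}$) one gets a parabolic upper barrier on $S$, which both supplies the missing height control and lets one find a starting point $x_0$ with $V(x_0)\le 1$; the exponential decay of $\mathbb A_n^{x_0}$ then forces $V(x_0+k)<-\delta k$ for all $k\in[\sqrt L,L]$ except on an event of probability $e^{-\kappa\sqrt L}$, and this in turn makes $S$ go negative, a contradiction.
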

Assumption \eqref{e: perturbative condition 2} controls the statistics of the large negative forces which may pin down the interface. Note that the decay in \eqref{e:Assumption} is not optimal.

\medskip

\begin{remark}
If $\omega(x,y)$ are independent Gaussian variables with mean $f >0$ and variance $\sigma$,
then condition \eqref{e: perturbative condition 2} reads
\begin{eqnarray*}
\exp \left( - \lambda_0  f + \frac{\sigma}{2} \lambda_0^2 \right) < 1 - \exp( - \lambda_0),
\end{eqnarray*}
which  holds  for an  appropriate choice of $\lambda_0$ when $f$ is large enough.
\end{remark}

\begin{proof}
The proof is split into 3 steps.

\medskip

\noindent
{\bf Step 1.}

First, we are going to control the upward fluctuations of the blocked interfaces.
We introduce
\begin{equation}
\label{eq: D}
\mathcal{D} := \{\omega(x,y) \geq - \sqrt{L}, \text{ for every $(x,y) \in C^{h,L}$}\}
\end{equation}
which has probability at least $1 - 3 h L^4 \exp( - \lambda_0 \sqrt{L})$. This follows from bounding the tail of a single variable
\begin{equation}
\label{eq: D borne}
\begin{split}
  \mathbb{P} \big( \mathcal{D}^c \big) = 3 h L^4 \mathbb{P} \big(
  \omega < - \sqrt{L} \big) & \leq 3 h L^4 \exp( - \lambda_0 \sqrt{L} )
  \; \mathbb{E} \big( \exp( - \lambda_0 \omega ) \big)\\
  & \leq 3 h L^4
  \exp( - \lambda_0 \sqrt{L} ) \, ,
\end{split}
\end{equation}
where we used \eqref{e: perturbative condition 2} to conclude.

Let $S$ be a blocked interface such that $S(x_0+1) -S(x_0) \leq 1$, then
on the event $\mathcal{D}$, the maximum principle implies that $S$  remains below the interface $\mathcal{H}$ defined by
\begin{equation*}
\begin{cases}
\mathcal{H} (x_0) = S(x_0), \quad
\mathcal{H} (x_0+1) = S(x_0) +1  \\
\mathcal{H}(x+1) = - \mathcal{H}(x-1) +  2 \mathcal{H} (x)  + \sqrt{L}, \qquad x \geq x_0+1
\end{cases}
\end{equation*}
The solution is given by
\begin{equation}
\label{eq: H comparaison}
\mathcal{H}_{x_0,S(x_0)} (x) = \frac{\sqrt{L}}{2} \;  (x-x_0) (x-x_0 -1) + S(x_0) + x - x_0 \, ,
\end{equation}
where we added the subscript to stress the initial data.

If $S(x_0) \leq L$ then $\mathcal{H}_{x_0,S(x_0)}$ does not reach the top layer of $C^{h,L}$, so that  $S$ will never reach the height $H_L = L^3$.

\medskip

\noindent
{\bf Step 2.}


Let us introduce some  notation.
For a surface $S$, we denote by $V(x) = S(x+1) - S(x)$ its increment at $x$.
For $\ell \leq n \leq 3 hL$, define $\mathcal{O}_{\ell,n}$ the set of surfaces in $C^{h,L}$ which are obstructed for all $\ell < x \leq  n$ and with a controlled height and a controlled initial gradient at $\ell$ that is, such that
\begin{align}
\label{e: obstructed gauss 1}
\forall x \in [\ell +1, n], \qquad
& V (x) \leq   V (x-1) \negthinspace - \negthinspace   \omega(x,S (x)) \, , \\
\label{e: obstructed gauss 2}
& S(\ell) \leq 2 L , \quad
V(\ell) \leq 1 \, ,\\
\label{e: obstructed gauss 3}
\forall x \in [\ell, n], \qquad & S(x) < \mathcal{H}_{\ell, 2 L} (x) \, .
\end{align}
Note that
\begin{equation}
 \label{e:SinO}
 \arraypar{the fact that a given surface $S$ belongs to $\mathcal{O}_{\ell,n}$ only depends on $S(\ell), \dots, S(n+1)$ and $\omega(\ell, S(\ell)), \dots, \omega(n+1, S(n+1))$.}
\end{equation}


\medskip

For $h>1$, we are going to check that
\begin{equation}
\label{eq: inclusion}
\mathcal{D} \cap \big\{ \exists \ \text{a surface $S$ blocking } C^{h,L} \big\}
\subseteq \bigcup_{x_0 \leq 2hL} \mathcal{O}_{x_0,3 hL} \, .
\end{equation}
First note that any surface $S$ blocking $C^{h,L}$ satisfies the constraint \eqref{e: obstructed gauss 1}
with $\ell = x_0$ and $n = 3hL$.
If an interface remains below the level $L$ in $\{0,2hL\}$ then \eqref{e: obstructed gauss 2}
has to be satisfied for at least one site $x_0$.

We now establish (\ref{e: obstructed gauss 2}).
If the interface $S$ is blocking $C^{h,L}$ but goes beyond the level $L$ before $2 hL$,  then define $z$ the first time at which $S(z)> L$ and $S(z+1) \leq L$. Either $S(z+2) - S(z+1) \leq 1$ and
\eqref{e: obstructed gauss 2} is satisfied for $x_0 = z+1$, or $S(z+2) - S(z+1) > 1$ and using the fact that the interface is obstructed  and the environment belongs to $\mathcal{D}$, one has
$$
S(z)  \leq S(z+1) - \big( S(z+2) - S(z+1) \big) - \omega(z+1,S(z+1))
\leq L + \sqrt{L} \, .
$$
Thus \eqref{e: obstructed gauss 2} is satisfied for $x_0 = z$.
Furthermore, we recall from step 1 that for environments in $\mathcal{D}$ the constraint \eqref{e: obstructed gauss 3} is implied by \eqref{e: obstructed gauss 1} and \eqref{e: obstructed gauss 2}.

\medskip

From \eqref{eq: inclusion} and \eqref{eq: D borne}, it is enough to bound from above the probability of each $\mathcal{O}_{x_0,3 hL}$.
The key estimate will be
\begin{lemma}
\label{lem: intermediaire}
There are $\kappa, \beta, \delta$ positive constants such that given any $x_0 \leq 2hL$,
\begin{equation}
 \label{eq: key upper bound}
 \begin{split}
 & \mathbb{P} \left[ \text{there exists $S  \in  \mathcal{ O}_{x_0,n}$ with } \; 
 \; V(n) \geq - \delta (n-x_0) \right] \\
 & \qquad \qquad \qquad \leq  \beta L \exp \big( - \kappa (n-x_0) \big), \qquad \text{ for every $n \in [x_0, 3hL]$.}
 \end{split}
\end{equation}
\end{lemma}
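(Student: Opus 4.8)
The plan is to bound the probability in \eqref{eq: key upper bound} by a first moment (exponential Chebyshev) over all candidate obstructed surfaces, after first translating the event into a statement about the total potential the surface collects. Indeed, if $S\in\mathcal O_{x_0,n}$, summing the obstruction inequalities \eqref{e: obstructed gauss 1} over $x\in[x_0+1,n]$ gives $V(n)\le V(x_0)-\sum_{x=x_0+1}^{n}\omega(x,S(x))$, so with $V(x_0)\le1$ from \eqref{e: obstructed gauss 2} and the hypothesis $V(n)\ge-\delta(n-x_0)$ one gets
\[
\sum_{x=x_0+1}^{n}\omega(x,S(x))\ \le\ 1+\delta(n-x_0).
\]
Thus the event in \eqref{eq: key upper bound} is contained in the event that some surface $S$, obstructed on $(x_0,n]$ and respecting \eqref{e: obstructed gauss 2}--\eqref{e: obstructed gauss 3}, collects anomalously small total potential; and since $\mathbb E[e^{-\lambda_0\omega}]<1$ forces $\mathbb E[\omega]>0$, for a fixed admissible height profile this is a genuine large deviation event that \eqref{e: perturbative condition 2} will control.

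Next I would bound the probability by the expected number of admissible obstructed surfaces, each weighted by $e^{\lambda_0(1+\delta(n-x_0))}\prod_{x=x_0+1}^{n}e^{-\lambda_0\omega(x,S(x))}$, via $\mathbf 1\{\textstyle\sum\le B\}\le e^{\lambda_0(B-\sum)}$. Writing obstruction at $x$ as the event $\{\omega(x,S(x))\le-\Delta S(x)\}$ and using that the variables $\omega(x,S(x))$, $x=x_0+1,\dots,n$, lie at distinct sites and are therefore independent, the expectation factorizes: for a fixed profile it equals $\prod_{x=x_0+1}^{n}\psi\big(-\Delta S(x)\big)$, with $\psi(t):=\mathbb E\big[e^{-\lambda_0\omega}\mathbf 1\{\omega\le t\}\big]$ nondecreasing and $\psi(t)\uparrow\mathbb E[e^{-\lambda_0\omega}]<1-e^{-\lambda_0}$.

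It then remains to sum $\prod_{x}\psi(-\Delta S(x))$ over admissible profiles, which I would do column by column. Obstruction forces $\Delta S(x)\le-\omega(x,S(x))$, so $V$ is essentially nonincreasing in $x$ --- it increases only at the rare columns where $\omega<0$, and those are heavily damped because $\psi$ is small for negative arguments --- while the floor $S\ge0$ and the envelope $\mathcal H_{x_0,2L}$ bound the admissible increment at each column. Bounding $\psi$ suitably and organizing the sum, the contribution of the move at a single column is a geometric-type sum of total mass at most $(1-e^{-\lambda_0})^{-1}$ times the random weight $\psi(\cdot)\le\mathbb E[e^{-\lambda_0\omega}]$; by the hypothesis $\mathbb E[e^{-\lambda_0\omega}]/(1-e^{-\lambda_0})<1$ this per-column factor is some $\rho<1$. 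Iterating over the $n-x_0$ columns, summing over the $\le2L+1$ admissible values of $S(x_0)$, and absorbing the prefactor, one gets $\le\beta L\,(e^{\lambda_0\delta}\rho)^{n-x_0}$; choosing $\delta>0$ small enough that $e^{\lambda_0\delta}\rho\le e^{-\kappa}<1$ then yields \eqref{eq: key upper bound}.

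The main obstacle is making the per-column contraction $\rho<1$ rigorous. Because the surfaces are non-Lipschitz and the traps unbounded, the envelope $\mathcal H_{x_0,2L}$ is of polynomial-in-$L$ height, so a crude count of admissible profiles overshoots the target by powers of $L$ per column; the bookkeeping must therefore use, simultaneously, the one-sided curvature bound coming from obstruction, the floor $S\ge0$, the envelope, and the exact balance in \eqref{e: perturbative condition 2} that makes the product of the random weight and the move-entropy strictly less than one. I also expect that one has to carry out the estimate on a good environment event of the type $\mathcal D$ in \eqref{eq: D}, in order to keep the range of admissible moves, and hence of upward excursions of $V$, under control.
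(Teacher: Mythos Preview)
Your overall plan --- exponential Chebyshev followed by a union bound over profiles and a per-column contraction with ratio $\rho=\mathbb{E}[e^{-\lambda_0\omega}]/(1-e^{-\lambda_0})<1$ --- is exactly the paper's strategy, and you have correctly identified the contraction constant. The gap is in the way you organize the expectation.

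Once you integrate out the environment for a fixed profile and write the weight as $\prod_{x}\psi(-\Delta S(x))$ with $\psi(t)=\mathbb{E}[e^{-\lambda_0\omega}\mathbf 1\{\omega\le t\}]$, the column-by-column sum over profiles no longer converges: $\psi(t)\to\mathbb{E}[e^{-\lambda_0\omega}]>0$ as $t\to+\infty$, so summing $\psi(V(x-1)-V(x))$ over $V(x)\in\mathbb{Z}$ (equivalently over the next height) gives infinitely many terms of size bounded away from zero. The divergence is driven by profiles with steeply \emph{decreasing} $V$ (large positive $-\Delta S$), which neither the floor $S\ge 0$, nor the ceiling $\mathcal H$, nor the event $\mathcal D$ rules out at a single column; those are cumulative constraints and give no per-column bound. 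In short, by absorbing the obstruction indicator into $\psi$ you have lost the geometric decay in the height variable that would make the sum finite.

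The paper avoids this by \emph{not} integrating out the randomness first. It applies Chebyshev in the form
\[
\mathbb{P}[V(n)\ge -\delta(n-x_0)]\ \le\ e^{\lambda\delta(n-x_0)}\,\mathbb{A}_n^{x_0},\qquad
\mathbb{A}_n^{x_0}:=\sum_{S_{x_0,n}}\mathbb{E}\big[e^{\lambda V(n-1)-\lambda\omega(n,S(n))}\mathbf 1\{S_{x_0,n}\in\mathcal O_{x_0,n-1}\}\big],
\]
and peels off the last height $s=S(n)$. The obstruction inequality at $n-1$ is used as a \emph{range constraint} $0\le s\le S(n-1)+V(n-2)-\omega(n-1,S(n-1))$ (still random), while the weight carries the factor $e^{\lambda(s-S(n-1))}$, which is genuinely geometric in $s$. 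Summing this geometric series and then taking expectation over the fresh variable $\omega(n,\cdot)$ yields the clean recursion
\[
\mathbb{A}_n^{x_0}\ \le\ \frac{\mathbb{E}[e^{-\lambda\omega}]}{1-e^{-\lambda}}\;\mathbb{A}_{n-1}^{x_0},
\]
which iterates down to $\mathbb{A}_{x_0}^{x_0}\le \beta L$ (from $S(x_0)\le 2L$, $V(x_0)\le 1$). No restriction to $\mathcal D$ is needed. The point to take away is that the obstruction constraint must be used to bound the \emph{summation range} of the next height, not folded into the weight; otherwise the exponential tilt $e^{\lambda V}$ that makes the height-sum geometric is lost.
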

Loosely speaking Lemma \ref{lem: intermediaire} says that a typical surface in ${\mathcal{O}}_{\ell,n}$ has ``negative curvature'' in average with high probability. The proof is postponed to step 3.

\medskip

Inequality \eqref{eq: key upper bound} implies that
\begin{align}
\label{eq: key upper bound 2}
& \mathbb{P} \left[ \exists S  \in  \mathcal{O}_{x_0,3 hL}; \quad \exists k \in [\sqrt{L}, L], \quad
V(x_0+ k)  \geq - \delta k \right]\\
& \qquad \leq
\sum_{k = \sqrt{L}}^L
\mathbb{P} \left[   \exists S  \in  \mathcal{O}_{x_0, k};
\quad V(x_0+ k)  \geq - \delta k \right]
\leq \beta  L^2 \exp( - \kappa \sqrt{L}  ) \, . \nonumber
\end{align}
On the other hand, there does not exists a surface $S$ in $\mathcal{O}_{x_0,3 hL}$
such that
$$
\forall  k \in [\sqrt{L}, L], \qquad V(k +x_0) < - \delta k \, .
$$
Indeed, any such surface would satisfy for $L$ large enough
\begin{equation}
\label{eq: borne S}
S(L +x_0) - S( \sqrt{L} + x_0) \leq - \delta \sum_{k = \sqrt{L}}^L k \leq -  \frac{\delta}{4} L^2 \, .
\end{equation}
An interface $S$  in  $\mathcal{O}_{x_0,3hL}$ is below
$ \mathcal{H}_{x_0,2 L}$ \eqref{e: obstructed gauss 3} so that
$S( \sqrt{L} + x_0) \leq L^{3/2} + S(x_0)$. As $S(x_0) \leq 2 L$, this would imply that
$S(L+x_0) <0 $ and therefore lead to a contradiction.

Thus one deduces from \eqref{eq: key upper bound 2} that
\begin{equation}
\label{eq: key upper bound 3}
\mathbb{P} \Big[  \exists S  \in \textstyle \bigcup\limits_{x_0 \leq 2 hL} \mathcal{O}_{x_0,3 hL} \Big]
\leq 2h  \beta  L^3  \exp( - \kappa \sqrt{L} ) \, ,
\end{equation}
and combining this with \eqref{eq: D borne} completes the proof of Lemma~\ref{l:mainassump}.

\end{proof}

\medskip

\noindent
{\bf Step 3.}

In the last step, we complete the proof of Lemma~\ref{lem: intermediaire}.

\begin{proof}[Proof of Lemma~\ref{lem: intermediaire}]
%
%
%

Fix $x_0 \leq 2hL$ and $x_0 \leq n \leq 3hL$. Using \eqref{e: obstructed gauss 1} at site $n$, one gets
\begin{eqnarray*}
&& \mathbb{P} \left[  \exists S  \in  \mathcal{ O}_{x_0,n};
 \quad V(n) \geq - \delta (n-x_0) \right]\\
&& \quad
\leq
\mathbb{P} \Big[  \exists S_{x_0, n}  \in  \mathcal{O}_{x_0,n - 1};
S(n)< H_L, \quad   V(n-1) - \omega(n ,S(n))  \geq - \delta (n-x_0) \Big]
\end{eqnarray*}
where the  inequality is obtained by using only the constraint that the interface is obstructed  up to $n-1$.
The constraint $\mathcal{O}_{x_0,n-1}$ involves only the interface on the  sites in $[x_0,n]$, see \eqref{e:SinO}. Therefore, one can consider  portions of interfaces of the form $S_{x_0, n} = \{ S(i) \}_{x_0 \leq i \leq n}$ and bound
\begin{equation}
\begin{split}
& \mathbb{P}
 \bigg[
 \begin{array}{c}
  \exists S_{x_0, n}  \in  \mathcal{O}_{x_0,n - 1};
  S(n)< H_L, \quad   V(n-1) - \omega(n ,S(n))  \geq - \delta (n-x_0)
 \end{array}
 \bigg] \\
& \quad
\leq  \sum_{S_{x_0, n}} \mathbb{P}
 \bigg[
 \begin{array}{c}
  S_{x_0, n}  \in \mathcal{O}_{x_0,n-1}; S(n) < H_L,\\
  V(n-1) - \omega(n,S(n))  \geq - \delta (n - x_0)
 \end{array}
 \bigg] \\
& \quad
\leq \exp \big( \delta \lambda (n -x_0) \big)
\sum_{S_{x_0,n}}
\mathbb{E}
 \bigg[
 \begin{array}{c}
  \exp \Big( \lambda \big( V(n-1) - \omega(n,S(n) \big) \Big)\\
  1{\{  S_{x_0,  n}  \in \mathcal{O}_{x_0,n-1};  \ S(n)< H_L \}}
  \end{array}
 \bigg]
\end{split}
\end{equation}
We are now going to show that there exists $\beta >0$ such that for any $n \geq x_0$
\begin{equation}
\label{e: exp bound}
 \begin{split}
  \mathbb{A}_n^{x_0} & :=  \sum_{S_{x_0, n}} \mathbb{E}
 \bigg[
 \begin{array}{c}
  \exp \left( \lambda  V(n-1)  -  \lambda \omega(n,S(n)) \right)\\
  1{\{ S_{x_0, n} \in \mathcal{O}_{x_0,n-1}; \  S(n) < H_L \}}
 \end{array}
 \bigg]
 \leq
\beta  L \; \left( \frac{\mathbb{E}( \exp( - \lambda \omega) )}{1 - \exp( - \lambda) } \right)^{n-  x_0} \, .
 \end{split}
\end{equation}
Assumption \eqref{e: perturbative condition 2} implies for $\lambda = \lambda_0$ that $\mathbb{A}_n^{x_0}$  decays exponentially fast.
Thus, by choosing $\delta$ small enough, the Lemma will be complete.

\bigskip

It remains to check \eqref{e: exp bound}. An interface of form $S_{x_0, n}$ can be rewritten as the concatenation of $(S_{x_0,n-1},s)$, where $s$ is the height $S(n)$.
The constraint \eqref{e: obstructed gauss 1} leads to
\begin{equation*}
\begin{split}
\mathbb{A}_n^{x_0} & \leq
\sum_{(S_{x_0,n-1},s)} \mathbb{E}
 \bigg[
 \begin{array}{c}
  1{\{ (S_{x_0,n-1}, s)  \in \mathcal{O}_{x_0,n-1} ;   s < H_L \}}
  \exp \big( \lambda (s - S(n-1)) -  \lambda \omega(n,s) \big)
 \end{array}
 \bigg] \\
& \leq
\sum_{S_{x_0,n-1}} \mathbb{E} \Big[  1{\{ S_{x_0,n-1}  \in \mathcal{O}_{x_0,n-2}; S(n-1) < H_L \}}\\
& \qquad \qquad
\sum_{\mathclap{0 \leq s \leq S(n-1) \atop + V(n-2) - \omega(n-1,S(n-1) )}}
\exp \left( \lambda (s - S(n-1)) -  \lambda \omega(n, s) \right) \Big]
\end{split}
\end{equation*}
Conditioning on the first $n-1$ sites, one has
\begin{align}
\nonumber
\mathbb{A}_n^{x_0}  & \leq  \mathbb{E} \big[  \exp (- \lambda \omega) \big] \sum_{S_{x_0,n-1}} \mathbb{E} \Big[  1_{\{ S_{x_0,n-1}  \in \mathcal{O}_{x_0,n-2} ; \atop  S(n-1) \leq H_L  \}}
\sum_{s =0}^{\mathclap{S(n-1)  + V(n-2) \atop - \omega(n-1,S(n-1) )}}
\exp \left( \lambda (s - S(n-1))  \right) \Big] \\
& \leq
\frac{\mathbb{E} \big[  \exp (- \lambda \omega) \big]}{1 - \exp(- \lambda)}
\sum_{S_{x_0,n-1}} \mathbb{E}
 \bigg[
 \begin{array}{c}
  1{\{ S_{x_0,n-1}  \in \mathcal{O}_{x_0,n-2} ; \  S(n-1) < H_L  \}} \\
  \exp( \lambda  V(n-2) - \lambda  \omega(n-1,S (n-1)))
 \end{array}
 \bigg] \\
\nonumber
& \leq
\frac{\mathbb{E} \big[ \exp (- \lambda \omega) \big]}{1 - \exp( - \lambda)}
\mathbb{A}_{n-1}^{x_0}
\leq \dots \leq \beta L
\left(
\frac{\mathbb{E} \big[  \exp (- \lambda \omega) \big]}{1 - \exp( - \lambda)}
\right)^{n-x_0} \, ,
\end{align}
where we used the fact that $V(x_0) \leq 1$ for interfaces in $\mathcal{O}_{x_0,n}$
and that $S(x_0)$ takes at most $2 L$ values.
\end{proof}

\section{Conclusion and Open problems}

In this paper, we devised a renormalization procedure to study the surface motion in a random environment. When the finite size criterion \eqref{e:W} can be checked then our result implies that the interfaces move with positive velocity.
For a class of $2$-Lipschitz interface evolution, we have been able to prove that this criterion holds up to the pinning transition :  either the interfaces are blocked or they have a positive velocity.
Thus this rules out the possibility of an intermediate sub-ballistic regime for this type of dynamics.
The validity of criterion \eqref{e:W} for other classes of models is a challenging open question up to the pinning transition in particular for
unbounded gradient dynamics of the type \eqref{e:evolve2}.
In fact, it is still an open issue to prove that such dynamics have a positive velocity when the dimension $d$ is larger than 3 even for large driving forces.

The evolutions investigated in this paper are deterministic and the source of randomness is only due to the environment. Rephrased in physical terms, this means that we considered zero temperature dynamics. Many important physical problems  are related to interface motion
with positive temperature \cite{doussal, KoltonRosso} and it would be interesting to provide rigorous renormalization schemes in this new framework.

\bibliographystyle{plain}
\bibliography{all}

\end{document}